\documentclass[12pt,twoside]{amsart}   
\usepackage{amsthm, amsmath,amssymb, amsfonts, fancyhdr,mathrsfs}
\usepackage{amscd}
\usepackage{graphicx}
\usepackage{psfrag}
\usepackage{setspace}

\input xy
\xyoption{all}

\theoremstyle{plain}
\newtheorem{thm}{Theorem}[section]
\newtheorem{lem}[thm]{Lemma}
\newtheorem{prop}[thm]{Proposition}
\newtheorem{cor}[thm]{Corollary}

\theoremstyle{definition}
\newtheorem{defn}{Definition}[section]
\newtheorem{exmp}{Example}[section]

\newtheorem{defprop}{Definition-Proposition}[section]

\newcommand{\parb}[1]{\left(#1\right)}

\begin{document}

\title[Rational Functions over $p$-adic Numbers]{Digraph Representations of Rational Functions over the  $p$-adic Numbers}

\author[Diao] {Hansheng Diao}
\address[Hansheng Diao]{Massachusetts Institute of Technology, MA 02139, USA }
\email{hansheng@mit.edu}
\author[Silva]{Cesar E. Silva}
\address[Cesar Silva]{Department of Mathematics\\
     Williams College \\ Williamstown, MA 01267, USA}
\email{csilva@williams.edu}

\subjclass{Primary 37A05; Secondary
37F10} \keywords{p-adic dynamics, ergodic, minimal, locally scaling}

\date{\today}

\begin{abstract}
In this paper, we construct a digraph structure on $p$-adic
dynamical systems defined by rational functions. We study the conditions under which
 the functions are measure-preserving,
 invertible and isometric, ergodic,   and minimal on invariant subsets,
 by means of graph theoretic properties.
\end{abstract}

\maketitle

\section{Introduction}

In recent years there has been interest in studying the measurable and topological dynamical properties of maps defined on compact open subsets of the $p$-adic numbers $\mathbb Q_p$.  In \cite{A06}, Anashin characterizes  measure-preserving and ergodic properties of $1$-Lipshitz maps on $\mathbb Z_p$, extending the work of other authors in \cite{OsZi75}, \cite{CoelhoParry}, \cite{GKL}, \cite{BrykSilva}. Minimality of a class of maps of the form $T_{\alpha,\beta}(z)=\alpha z+\beta, \alpha,\beta\in\mathbb Z_p$ was later studied
in \cite{FLY07}. Non-1-Lipshitz maps have also been studied; the reader may refer to \cite{KLPS} and the recent monograph \cite{AnKh}, and the references therein. In \cite{KLPS}, the authors introduced the notion of locally scaling transformations on compact-open subsets of a non-archimedean local field, showed when they are measure-preserving for Haar measure, proved a structure theorem about them  and studied ergodic properties of those maps such as mixing.  In this paper we are interested in rational functions defined on compact, or locally compact, open, subsets of $\mathbb Q_p$. We generalize the notion of locally scaling transformations and also consider locally $1$-Lipschitz maps. To study the dynamics of these maps we associate with the maps a digraph structure. Using the digraph we characterize the measure-preserving property in Theorem~\ref{m-p} and ergodicity and minimality in Theorem~\ref{erg}.
In Section~\ref{S:subsidiary} we introduce the notion of a subsidiary digraph, which helps us to determine the measure-preserving component of a rational function. In Section ~\ref{S:qp} we give a characterization of invertible locally isometric rational functions over $\mathbb{Q}_p$. We also prove that for locally 1-Lipschitz rational functions over $\mathbb{Q}_p$,  the measure-preserving property is equivalent to the map being an invertible local isometry.

\subsection{Acknowledgements}
This paper is based on research as part of the  Ergodic Theory group of the
2008 SMALL summer research project at Williams College.  Support for
the project was provided in part by National Science Foundation REU Grant DMS
- 0353634 and by  the Bronfman Science Center of Williams College.

\section{Preliminaries}
\bigskip
We  will study  rational functions of the form
$f(x)=\frac{P(x)}{Q(x)}$ where $P(x)$ and $ Q(x)$ are in $\mathbb{Q}_p[x]$. Without loss of generality, we
can assume that $P(x), Q(x)\in \mathbb{Z}_p[x]$ and $P(x), Q(x)$ are
coprime. Equip $\mathbb{Q}_p$ with the usual metric and topology (see \cite{R00}). Define
\[
B_r(a) = \{x\in \mathbb{Q}_p\,|\, |x-a|\leq r\}\]
 and \[S_r(a) = \{x\in \mathbb{Q}_p\,|\, |x-a|=r\}.\] Let $\mu$ be the usual Haar measure; i.e., let $\mu(B_{p^l}(a))=p^l$ for
all $l\in \mathbb{Z}$ and $a\in \mathbb{Q}_p$.

We will consider a well-defined rational function  $f:X\rightarrow X$  for some $X\subset \mathbb{Q}_p$, usually open and locally compact, or open and  compact. Then we have an
 dynamical system $(X, \mathscr{B}(X), \mu, f)$, possibly infinite. Here we require that $Q(x)$ has no root in $X$.

First we consider the case when $X=\mathbb{Q}_p$. Given a dynamical system $(\mathbb{Q}_p, \mathscr{B}(\mathbb{Q}_p),
\mu, f)$ we are interested in whether it is
minimal, ergodic, weakly mixing, invertible, isometric, etc.
But we note that  there is no minimal or ergodic rational function over $\mathbb{Q}_p$. As they are not the our main object of study, the proofs
of the following Propositions ~\ref{no-minimal} and ~\ref{no-ergodic} are presented in the appendix.

\begin{prop}\label{no-minimal}
There is no minimal rational function over $\mathbb{Q}_p$.
\end{prop}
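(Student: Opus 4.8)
The plan is to exploit the non-compactness of $\mathbb{Q}_p$ by analyzing $f$ near the point at infinity: in every case I will produce either a proper, nonempty, closed, forward-invariant subset of $\mathbb{Q}_p$ (impossible for a minimal map) or a single orbit that escapes to infinity and is therefore not dense (again ruling out minimality). The whole argument rests on one elementary asymptotic estimate. Writing $m=\deg P$ and $n=\deg Q$ with leading coefficients $a_m,b_n$, the ultrametric inequality forces the leading monomials to dominate for large argument, so $|P(x)|=|a_m|\,|x|^m$ and $|Q(x)|=|b_n|\,|x|^n$ once $|x|$ is large; hence there is $R\ge 1$ with
\[
|f(x)|=\rho\,|x|^{\,m-n}\quad\text{for }|x|\ge R,\qquad \rho:=|a_m|/|b_n|.
\]
Since $Q$ has no root in $\mathbb{Q}_p$, the map $f$ is continuous on the compact ball $B_R(0)$, so $M:=\max_{|x|\le R}|f(x)|$ is finite; this bound controls the ``core'' in the invariant-ball arguments.

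Next I would split into cases according to $\ell:=m-n$. If $\ell\le 0$, then $|f(x)|\le\rho$ for $|x|\ge R$, so $f$ is globally bounded and $f(\mathbb{Q}_p)\subseteq B_S(0)$ for $S$ a sufficiently large power of $p$ (at least $\max(R,M,\rho)$); then $B_S(0)$ is compact, hence proper, and satisfies $f(B_S(0))\subseteq B_S(0)$, contradicting minimality. The constant-function case is subsumed here. If $\ell\ge 2$, then $|f(x)|=\rho\,|x|^{\ell}>|x|$ for all sufficiently large $|x|$; choosing $x$ in this regime one checks that $|f^{k}(x)|$ is a strictly increasing sequence of powers of $p$, so $|f^{k}(x)|\to\infty$. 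Such an orbit meets each ball $B_{p^{j}}(0)$ in only finitely many points and so cannot be dense in the (infinite, perfect) space $\mathbb{Q}_p$.

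The delicate case, and the one I expect to be the main obstacle, is $\ell=1$, where $|f(x)|=\rho\,|x|$ for $|x|\ge R$ and the behaviour is decided by the leading-coefficient ratio $\rho$ rather than by degrees alone. When $\rho>1$ the escape argument of the $\ell\ge2$ case applies verbatim. When $\rho\le 1$ one instead has $|f(x)|\le|x|$ for $|x|\ge R$, and I would take $S$ a power of $p$ with $S\ge\max(R,M)$: a point of modulus between $R$ and $S$ maps into $B_{\rho S}(0)\subseteq B_S(0)$, while the compact core $B_R(0)$ maps into $B_M(0)\subseteq B_S(0)$, so $B_S(0)$ is again forward invariant and proper. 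Assembling the three cases shows that no rational function on $\mathbb{Q}_p$ admits only $\emptyset$ and $\mathbb{Q}_p$ as closed forward-invariant sets, i.e.\ none is minimal. The only genuinely careful point is the $\ell=1$ dichotomy, where one must track $\rho$ and verify the invariance of $B_S(0)$ by treating the asymptotic region and the compact core separately.
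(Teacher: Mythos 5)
Your proof is correct and follows essentially the same route as the paper's: both rest on the leading-term asymptotics $|f(x)|=\rho|x|^{m-n}$ for large $|x|$ (the paper's Lemma~\ref{N}) and a case split on $m-n$ and the leading-coefficient valuation, producing in each case either a forward-invariant compact ball or an orbit confined to $\{|y|\ge p^{N}\}$ and hence not dense. The only cosmetic differences are that you bound $f$ on the compact core by continuity and compactness where the paper invokes the uniform lower bound $|Q_1(x)|\ge p^{l_0}$ of Lemma~\ref{l}, and you organize the cases by $\ell=m-n$ first and then by $\rho$, which is arguably tidier.
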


\begin{prop}\label{no-ergodic}
There is no measure-preserving and ergodic rational function over $\mathbb{Q}_p$.
\end{prop}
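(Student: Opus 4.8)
The plan is to rule out the two properties simultaneously by analyzing the behavior of $f=P/Q$ near the point at infinity, which reduces to a comparison of $\deg P$ and $\deg Q$. Write $m=\deg P$, $n=\deg Q$, and let $c=a_m/b_n$ be the ratio of leading coefficients. Since $P,Q$ are coprime and $Q$ has no root in $\mathbb{Q}_p$, for $|x|$ larger than some threshold $R_0$ the ultrametric inequality forces the leading terms to dominate, so that $|P(x)|=|a_m|\,|x|^m$ and $|Q(x)|=|b_n|\,|x|^n$; hence
\[
|f(x)| = |c|\,|x|^{m-n} \qquad \text{for all } |x|>R_0.
\]
I will assume $f$ is both measure-preserving and ergodic and derive a contradiction in each resulting case. (The constant case is trivial, since a constant map is not measure-preserving on a space of infinite measure.)

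First I would treat $m\le n$. Here $|f(x)|\le |c|$ for all $|x|>R_0$, so $f$ sends the neighborhood of infinity $\{|x|>R_0\}$ into the ball $B_{|c|}(0)$. Consequently $f^{-1}(B_{|c|}(0))\supseteq\{|x|>R_0\}$, which has infinite Haar measure, while $\mu(B_{|c|}(0))=|c|<\infty$. This already contradicts measure preservation. The intuition is that when the degrees are equal or the denominator dominates, infinity is sent to a finite value and an infinite-measure neighborhood of infinity is compressed into a bounded set.

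The remaining and genuinely delicate case is $m>n$, where $f(\infty)=\infty$ and measure preservation can hold; here the contradiction must come from ergodicity. Put $t=m-n\ge 1$. When $t\ge 2$, or when $t=1$ and $|c|\ge 1$, one has $|f(x)|\ge |x|$ for all $|x|>R_0$, so for $R=p^k>R_0$ no point of $\{|x|>R\}$ can lie in $f^{-1}(B_R(0))$; thus $f^{-1}(B_R(0))\subseteq B_R(0)$. Measure preservation then forces $\mu(B_R(0)\setminus f^{-1}(B_R(0)))=0$, i.e. $f^{-1}(B_R(0))=B_R(0)$ up to a null set. This exhibits an invariant set of finite positive measure $\mu(B_R(0))=R$ whose complement has infinite measure, contradicting ergodicity. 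The one subcase where this monotonicity fails is $t=1$ with $|c|<1$; there $|f(x)|=|c|\,|x|<|x|$, so writing $|c|=p^{-s}$ with $s>0$ one checks that for large $k$ the annulus $\{R_0<|x|\le p^{k+s}\}$ lies inside $f^{-1}(B_{p^k}(0))$ and has measure exceeding $p^k=\mu(B_{p^k}(0))$, again contradicting measure preservation.

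The step I expect to be the main obstacle is the case $m>n$: unlike $m\le n$, measure preservation is not obstructed here, so one cannot argue directly and must instead produce an explicit $f$-invariant set to defeat ergodicity. The crux is the clean dichotomy, governed by the asymptotic $|f(x)|=|c|\,|x|^{m-n}$, between $|f(x)|\ge|x|$ near infinity (which makes a large ball forward-absorbing, hence invariant under $f^{-1}$) and $|f(x)|<|x|$ near infinity (which overloads the preimage of a large ball); verifying that these two regimes exhaust all possibilities, and in particular handling the borderline $t=1$, $|c|=1$ case correctly, is where care is required.
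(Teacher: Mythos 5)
Your proof is correct and follows essentially the same route as the paper: both analyze $|f(x)|=|c|\,|x|^{m-n}$ near infinity via leading-term domination, use measure preservation to eliminate every degree/leading-coefficient configuration except $m=n+1$ with $|c|=1$, and in the remaining case exhibit an $f$-invariant set of finite positive measure near infinity to defeat ergodicity (this is exactly the content of Propositions~\ref{m-p-qp} and~\ref{m-p-qp2}, which the paper's two-line proof cites). The only cosmetic differences are that you use the ball $B_R(0)$, upgraded from forward-absorbing to invariant mod null via measure preservation, where the paper uses the sphere $S_{p^{N_1}}(0)$, and that you fold the $|c|>1$ subcases into the invariant-set argument rather than contradicting measure preservation there directly.
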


\noindent\textbf{Remark.} There exist invertible isometric rational functions over $\mathbb{Q}_p$. For example, $f(x)=ax+b$ for $a\in \mathbb{Z}_p^{\times}$ and $b\in \mathbb{Q}_p$. Another example is given in Example ~\ref{exmp-inv-qp}. Actually, to check whether a rational function is an invertible local isometry, we only need to check it on a compact open subset. We will give a characterization of such rational functions in Section ~\ref{S:qp}.\\

Now the next natural question is: On which subsets of $\mathbb Q_p$ is the rational map
minimal, ergodic, invertible, or isometric? In particular, we want to find
a subset $Y\subset \mathbb{Q}_p$ such that $f|_Y:Y\rightarrow Y$ is
well-defined and satisfies some of these dynamical properties.

The following propositions shows that rational functions locally
look like scaling functions.  The definition of \emph{locally scaling} below
extends  the one in ~\cite{KLPS} as here  the local scalar is
not necessarily greater than 1 .

\begin{defn}\label{local-scal}
Let $X$ be any open subset of $\mathbb{Q}_p$ and let $g:X\rightarrow
X$ be a well-defined measurable function. We say that $g$ is
\textbf{locally scaling} if for any $a\in X$ there exists $r=r(a)>0$
and $C=C(a)>0$ such that 
\[
|g(x)-g(y)|=C|x-y|\text{ whenever }x, y\in
B_r(a).\]
 The map $C:X\rightarrow \mathbb{R}_{>0}$ is called the
\textbf{scaling function}. We also call $C(a)$ the \textbf{local
scalar} at $a$.
\end{defn}

\begin{prop}\label{P:local-scal}
Let $X\subset\mathbb{Q}_p$ be an open subset and let $f:X\rightarrow
X$ be a rational function. Let $a\in X$.
\begin{enumerate}
\item If $f'(a)\neq 0$, then there exists $r>0$ such that $B_r(a)\subset
X$ and $|f(x)-f(y)|=|f'(a)|\cdot|x-y|$ whenever $x,y\in B_r(a)$.
\item If $f'(a)=0$, then for any $r_0>0$, there exists $r>0$ such
that $B_r(a)\subset X$ and $|f(x)-f(y)|< r_0|x-y|$.
\end{enumerate}
\end{prop}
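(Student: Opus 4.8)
The plan is to reduce both statements to a single factorization of the difference $f(x)-f(y)$ and then exploit the strong triangle inequality, which forces the absolute value of a continuous, nonvanishing function to be locally constant. Write $f=P/Q$ with $P,Q\in\mathbb{Z}_p[x]$ coprime and $Q$ nonvanishing on $X$. First I would compute
\[
f(x)-f(y)=\frac{P(x)Q(y)-P(y)Q(x)}{Q(x)Q(y)}.
\]
The numerator vanishes on the diagonal $x=y$, hence is divisible by $(x-y)$; since $(x-y)$ is monic in $x$, polynomial division leaves a quotient $R(x,y)\in\mathbb{Z}_p[x,y]$, so that
\[
f(x)-f(y)=(x-y)\,h(x,y),\qquad h(x,y)=\frac{R(x,y)}{Q(x)Q(y)}.
\]
Because $Q(a)\neq 0$, the function $h$ is defined and continuous on a neighborhood of $(a,a)$ in $\mathbb{Q}_p\times\mathbb{Q}_p$, and the entire local scaling behavior of $f$ at $a$ is controlled by the size of $h$ there.

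Next I would identify the value $h(a,a)$. Differentiating the relation $P(x)Q(y)-P(y)Q(x)=(x-y)R(x,y)$ with respect to $x$ and setting $x=y=a$ gives $R(a,a)=P'(a)Q(a)-P(a)Q'(a)$, so that
\[
h(a,a)=\frac{P'(a)Q(a)-P(a)Q'(a)}{Q(a)^2}=f'(a),
\]
and in particular $|h(a,a)|=|f'(a)|$.

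The crux, and the point genuinely special to the non-archimedean setting, is that $|h|$ is locally constant wherever $h$ does not vanish: if $g$ is continuous at a point $w$ with $g(w)\neq 0$, then on any neighborhood where $|g-g(w)|<|g(w)|$ the isosceles-triangle form of the ultrametric inequality forces $|g|=|g(w)|$. For part (1), when $f'(a)\neq 0$ we have $h(a,a)\neq 0$, so this yields an $r>0$ with $B_r(a)\subset X$ (shrinking if necessary by openness of $X$) such that $|h(x,y)|=|f'(a)|$ for all $x,y\in B_r(a)$; multiplying by $|x-y|$ gives the asserted identity. For part (2), when $f'(a)=0$ we have $h(a,a)=0$, and continuity of $h$ at $(a,a)$ lets us, given $r_0>0$, choose $r>0$ with $B_r(a)\subset X$ and $|h(x,y)|<r_0$ on $B_r(a)$, whence $|f(x)-f(y)|<r_0|x-y|$ for $x\neq y$. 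The main obstacle is not any one computation but making precise the two supporting facts: that a neighborhood of $(a,a)$ may be taken of product form $B_r(a)\times B_r(a)$ compatible with $B_r(a)\subset X$, and that the \emph{equality} in part (1) — rather than a mere approximate multiplicativity — is exactly the payoff of local constancy of the absolute value.
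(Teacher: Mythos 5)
Your proof is correct, but it takes a genuinely different route from the one in the paper. The paper's argument is analytic: it invokes the fact that rational functions are analytic away from their poles, expands $f(x)-f(a)$ as a power series on a ball $B_R(a)$, writes $f(x)-f(y)=(x-y)\bigl[f'(a)+T(x,y,a)\bigr]$ with $T$ an explicit tail sum, and then bounds $|T(x,y,a)|$ below $|f'(a)|$ (respectively below $r_0$) using the finiteness of the $R$-Gauss norm $\|f\|_R$, concluding by the ultrametric equality $|f'(a)+T|=|f'(a)|$. Your argument is purely algebraic: you factor $P(x)Q(y)-P(y)Q(x)=(x-y)R(x,y)$ with $R\in\mathbb{Z}_p[x,y]$ (legitimate, since $x-y$ is monic in $x$), identify $h(a,a)=R(a,a)/Q(a)^2=f'(a)$ by differentiating the factorization, and then use only continuity of $h$ near $(a,a)$ together with local constancy of the non-archimedean absolute value of a nonvanishing continuous function. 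Both arguments are sound and land on the same ultrametric punchline; yours avoids power series and Gauss norms entirely, at the cost of being specific to rational functions (the paper's version applies verbatim to any analytic $f$, which is how it is stated in the reference the authors cite). It is worth noting that your decomposition is essentially the one the paper itself uses later, in the appendix lemma on computing $r$, where the authors write $f(x)-f(y)=\frac{x-y}{Q(x)Q(y)}T(x,y)$ with $T(a,a)=Q(a)^2 f'(a)$; so your proof in effect shows that the appendix machinery already suffices to establish Proposition~\ref{P:local-scal}. Two small points: in part (1) you should note explicitly that $r$ can be taken so that $B_r(a)\times B_r(a)$ lies inside the neighborhood on which $|h-h(a,a)|<|h(a,a)|$ and $B_r(a)\subset X$ (you flag this yourself, and it is immediate since product balls are a neighborhood basis of $(a,a)$); and in part (2) the strict inequality only makes sense for $x\neq y$, a caveat already implicit in the statement as the paper uses it.
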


\begin{proof} (Also see~\cite{KN04} Ch.3 Lemma 1.6.)
Recall that rational functions over $\mathbb{Q}_p$ are analytic
outside their poles (see~\cite{R00} Ch.6). Take $R>0$ such that
$B_R(a)\subset X$. Then we can write
\[f(x)-f(a) = \sum_{i=1}^{\infty}\frac{(x-a)^i}{i!}\frac{d^if}{dx^i}(a)\]
for $x\in B_R(a)$.\\
Thus
\begin{equation*}
\begin{split}
f(x)-f(y) & = (f(x)-f(a))-(f(y)-f(a))\\
& = (x-y)f'(a) +
\sum_{i=2}^{\infty}\frac{(x-a)^i-(y-a)^i}{i!}\frac{d^if}{dx^i}(a)\\
& = (x-y)\big[f'(a)+T(x,y,a)\big]\\
\end{split}
\end{equation*}
where \[T(x,y,a)
=\sum_{i=2}^{\infty}\frac{1}{i!}\frac{d^if}{dx^i}(a)[(x-a)^{i-1}+(x-a)^{i-2}(y-a)+\cdots+(y-a)^{i-1}].\]
By the analyticity of $f$ on $B_R(a)$, we know that the $R$-Gauss
norm $||f||_R = \sup_{0\leq
i<\infty}|\frac{1}{i!}\frac{d^if}{dx^i}(a)|R^i$ is finite.
\begin{enumerate}
\item If $f'(a)\neq 0$, take $0<r<\min\{R, \frac{|f'(a)|}{||f||_R}\}$. Then
for any $x,y\in B^-_r(a)$, \[|T(x,y,a)| \leq  \sup_{2\leq
i<\infty}|\frac{1}{i!}\frac{d^if}{dx^i}(a)|r^{i-1} \leq
r^{i-1}\frac{||f||_R}{R^i} < r||f||_R < |f'(a)|\]
\item Similarly, if $f'(a)=0$, take $0< r< \min\{R, \frac{r_0}{||f||_R}\}$.
\end{enumerate}
\end{proof}

\noindent{\bf Remark.} Under the assumption of Proposition ~\ref{P:local-scal}(1), it is not hard to see that $|f'(x)|=|f'(a)|$ for all $x\in B_r(a)$. Moreover, for any $r_1\leq r$, $f$ maps the ball $B_{r_1}(a)$ into a ball of radius $r_1/|f'(a)|$. So, if $f$ is measure-preserving, we must have $|f'(a)|\geq 1$ for all $a\in X$.

\begin{cor}\label{cor-local-scal}
Let $X$ be an open subset of $\mathbb{Q}_p$ and let $f:X\rightarrow
X$ be a rational function. In addition, we assume that $f'(x)$ has
no root in $X$. Then $f$ is locally scaling with local scalar
$C(a)=f'(a)$ for all $a\in X$.
\end{cor}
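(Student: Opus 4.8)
The plan is to observe that this corollary is essentially an immediate specialization of part (1) of Proposition~\ref{P:local-scal}, applied uniformly across all of $X$. First I would note that the hypothesis that $f'(x)$ has no root in $X$ is precisely the statement that $f'(a)\neq 0$ for every $a\in X$. This is exactly the condition under which part (1) of the proposition applies, so the real content of the corollary is just that we may run the proposition at every point of $X$ simultaneously.

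Next, I would fix an arbitrary $a\in X$ and invoke Proposition~\ref{P:local-scal}(1) at $a$: it furnishes a radius $r=r(a)>0$ with $B_{r}(a)\subset X$ such that $|f(x)-f(y)|=|f'(a)|\cdot|x-y|$ for all $x,y\in B_{r}(a)$. Setting $C(a):=|f'(a)|$, the fact that $f'(a)\neq 0$ guarantees $C(a)>0$, so the scaling function $C$ genuinely takes values in $\mathbb{R}_{>0}$, as required by Definition~\ref{local-scal}. (I note that the statement's ``$C(a)=f'(a)$'' should be read as $C(a)=|f'(a)|$, since $C$ must be a positive real.)

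Finally, comparing the pair $(r(a),C(a))$ produced above against Definition~\ref{local-scal}, I see that they satisfy exactly the defining property of a locally scaling map: for every $a\in X$ there exist $r(a)>0$ and $C(a)>0$ with $|f(x)-f(y)|=C(a)|x-y|$ throughout $B_{r(a)}(a)$. Hence $f$ is locally scaling with local scalar $|f'(a)|$ at each $a$. There is no substantive obstacle here; the one point worth isolating is that $C(a)$ is strictly positive, which is precisely where the no-root hypothesis is consumed. Without it, points with $f'(a)=0$ would fall under case (2) of the proposition, where the local distortion constant can be made arbitrarily small and hence no single scaling constant $C(a)$ governs the behaviour on any fixed ball, so the locally scaling property would fail at such points.
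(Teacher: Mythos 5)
Your proposal is correct and matches the paper's approach: the corollary is stated without a separate proof precisely because it is the pointwise application of Proposition~\ref{P:local-scal}(1), exactly as you argue, and your reading of $C(a)=f'(a)$ as $C(a)=|f'(a)|$ is the intended one. Your closing remark about why the no-root hypothesis is needed also agrees with the paper's own remark following the corollary.
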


\noindent{\bf Remark.} The condition ``\emph{$f'(x)$ has no root
in $\mathbb{Q}_p$}'' is crucial here. From
Proposition~\ref{P:local-scal}(2) we see that $f$ is never
locally scaling around a root of $f'(x)$.\\

We will mainly be interested in the case where
$X\subset\mathbb{Q}_p$ is  a compact open subset of $\mathbb Q_p$. As before,  we assume
$f:X\rightarrow X$ is well-defined and consider the dynamical system
$(X, \mathscr{B}(X), \mu, f)$.

\begin{defprop}\label{unif-local-scal}
Let $X$ be a compact open subset of $\mathbb{Q}_p$ and let
$f:X\rightarrow X$ be a rational function. Assume, in addition, 
that $f'(x)$ has no roots in $X$. Then $f$ is \textbf{uniformly
locally scaling} with local scalar $C(a)=|f'(a)|$; i.e., there exist $r>0$ such that for any $a\in
X$, $|f(x)-f(y)|=|f'(a)|\cdot|x-y|$ whenever $x,y\in B_r(a)$.
\end{defprop}

\begin{proof}
$X$ is covered by the union of balls $\bigcup_{a\in
X}B_{r(a)}(a)$. Since $X$ is compact, we can find a finite
subcover $X = \bigcup_{i=1}^k B_{r(a_k)}(a_k)$. Take
$r=\min_{1\leq i\leq k}r(a_k)$.
\end{proof}

Similarly, we can define the notions \emph{(uniformly) locally
isometric, (uniformly) locally $\rho$-Lipschitz, (uniformly) locally
bounded scaling} as follows.

\begin{defn}\label{local-isom}
A map $f:X\rightarrow X$ is \emph{\textbf{uniformly locally
isometric}} if there exists a constant $r>0$ such that for all $a\in
X$, $|f(x)-f(y)| = |x-y|$ whenever $x,y\in B_r(a)$.
\end{defn}

\begin{defn}\label{local-1-Lip}
Let $\rho>0$. A map $f:X\rightarrow X$ is \emph{\textbf{uniformly
locally $\rho$-Lipschitz}} if there exists a constant $r>0$ such
that for all $a\in X$, $|f(x)-f(y)|\leq \rho|x-y|$ whenever $x,y\in
B_r(a)$.

\end{defn}

\begin{defn}
A map $f:X\rightarrow X$ is \emph{\textbf{uniformly locally bounded
scaling}} if there exists a constant $C>0$ such that $f$ is
uniformly locally scaling and $C(a)\leq C$ for all $a\in X$.
\end{defn}

By a similar argument as in Proposition~\ref{unif-local-scal}, we
have the following criteria.
\begin{prop}\label{criteria}
Let $X$ be a compact open subset of $\mathbb{Q}_p$ and let
$f:X\rightarrow X$ be a rational function. Then
\begin{itemize}
\item $f$ is uniformly locally isometric if and only if $|f'(a)|= 1$ for all $a\in X$.
\item $f$ is uniformly locally $\rho$-Lipschitz if and only if $|f'(a)|\leq \rho$ for all $a\in X$.
\item $f$ is uniformly locally bounded scaling if and only if $|f'(a)|$ is bounded on
$X$ and $f'(x)$ has no root in $X$.

\end{itemize}
\end{prop}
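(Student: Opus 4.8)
The plan is to prove each biconditional by treating its two directions separately, using Proposition~\ref{P:local-scal} to translate between the local metric behaviour of $f$ near a point and the value of $|f'|$ there, and using the compactness of $X$ exactly as in the proof of Proposition~\ref{unif-local-scal} to promote a pointwise choice of radius $r(a)$ to a single uniform radius $r$. Throughout I will use that every ball $B_\rho(a)$ with $\rho>0$ contains at least two distinct points, so that one may always pick $x\ne y$ in such a ball and cancel the factor $|x-y|$.

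For the backward directions of the first two items, assume the stated bound on $|f'|$. Whenever $f'(a)\ne 0$, Proposition~\ref{P:local-scal}(1) supplies a radius $r(a)>0$ with $|f(x)-f(y)|=|f'(a)|\cdot|x-y|$ on $B_{r(a)}(a)$; under the hypothesis $|f'(a)|=1$ this is local isometry, and under $|f'(a)|\le\rho$ it is the local $\rho$-Lipschitz bound. In the Lipschitz case the remaining possibility $f'(a)=0$ is handled by Proposition~\ref{P:local-scal}(2) applied with $r_0=\rho$, which yields a radius $r(a)$ on which $|f(x)-f(y)|<\rho|x-y|$. Covering $X$ by the balls $B_{r(a)}(a)$, extracting a finite subcover, and taking $r$ to be the minimum of the finitely many radii then produces the uniform constant, precisely as in Proposition~\ref{unif-local-scal}.

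For the forward directions of the first two items, assume $f$ is uniformly locally isometric (resp.\ uniformly locally $\rho$-Lipschitz) with uniform radius $r$, and fix $a\in X$. If $f'(a)\ne 0$, then Proposition~\ref{P:local-scal}(1) gives a radius $r'$ on which $|f(x)-f(y)|=|f'(a)|\cdot|x-y|$; choosing distinct $x,y\in B_{\min\{r,r'\}}(a)$ and comparing with the uniform hypothesis forces $|f'(a)|=1$ (resp.\ $|f'(a)|\le\rho$). In the isometric case the alternative $f'(a)=0$ is excluded, since Proposition~\ref{P:local-scal}(2) with $r_0=\tfrac12$ would furnish distinct points with $|f(x)-f(y)|<\tfrac12|x-y|$, contradicting $|f(x)-f(y)|=|x-y|$; in the Lipschitz case $f'(a)=0$ trivially satisfies $|f'(a)|\le\rho$.

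Finally, for the bounded-scaling criterion, the backward direction is immediate: if $f'$ has no root in $X$ then Proposition~\ref{unif-local-scal} shows $f$ is uniformly locally scaling with $C(a)=|f'(a)|$, so a bound $|f'(a)|\le C$ is precisely a bound $C(a)\le C$ on the scaling function. For the forward direction, assume $f$ is uniformly locally bounded scaling. The step I expect to be the main obstacle is showing $f'$ has no root in $X$, since the notion of locally scaling already presupposes a positive local scalar and so does not by itself forbid derivative zeros; I would argue by contradiction. If $f'(b)=0$ for some $b\in X$, then local scaling at $b$ demands $|f(x)-f(y)|=C(b)|x-y|$ with $C(b)>0$ on some ball $B_\rho(b)$, whereas Proposition~\ref{P:local-scal}(2) applied with $r_0=C(b)$ produces distinct points in an arbitrarily small ball about $b$ on which $|f(x)-f(y)|<C(b)|x-y|$ --- a contradiction, which is exactly the content of the remark following Corollary~\ref{cor-local-scal} that $f$ is never locally scaling around a root of $f'$. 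Once $f'$ is known to have no root, Proposition~\ref{unif-local-scal} identifies $C(a)=|f'(a)|$, and the defining bound $C(a)\le C$ becomes the asserted bound $|f'(a)|\le C$, completing the proof.
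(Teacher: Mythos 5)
Your proof is correct and follows essentially the same route as the paper, which simply cites Proposition~\ref{P:local-scal} together with the compactness of $X$; you have filled in the routine details (both directions of each equivalence, the cancellation of $|x-y|$, and the exclusion of roots of $f'$ in the isometric and bounded-scaling cases) that the paper leaves implicit.
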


\begin{proof}
It follows immediately from Proposition~\ref{P:local-scal} and the
compactness of $X$.
\end{proof}

\section{Digraph Structure for Locally 1-Lipschitz Functions}\label{S:digraph}

Throughout this section we  let $f(x)$ be a locally 1-Lipschitz rational function
on a compact open subset $X\subset \mathbb{Q}_p$. By Proposition ~\ref{criteria}, this is equivalent to saying $|f'(a)|\leq 1$ for all $a\in X$.

Let $r=p^l$ ($l\in \mathbb{Z}$) be the constant involved in
Definition~\ref{local-1-Lip}. Let $t$ be any integer less than or
equal to $l$. Recall that we can write $X$ uniquely as disjoint
union of finitely many closed balls of radius $p^t$. Write, $X =
\bigsqcup_{i=1}^m D_{t,i}$. Since $f$ is 1-Lipschitz in each ball of
radius $p^l$, $f$ maps each  ball $D_{t,i}$ into another ball
$D_{t,j}$.

Construct a digraph $G(f, p^t)$ as follows. Let the set of vertices be  \[V(G) = \{A_{t,1},
A_{t,2},\cdots, A_{t,m}\}\] where $m=m(t)=\mu(X)/p^t$.  Join
$A_{t,i}$ to $A_{t,j}$ with a directed  edge  if $f$ maps $D_{t,i}$ to $D_{t,j}$; i.e.,
\[E(G)=\{(A_{t,i},A_{t,j}) \,|\, f(D_{t,i}) \subset D_{t,j}\}.\]

It is clear that the outdegree $d^+_G(A_{t,i})=1$ for all
$1\leq i\leq m$. Hence $\# E(G(f, p^t))=m(t)$ and there exists at
least one cycle in the digraph $G(f, p^t)$.

\begin{exmp}\label{examp}
Let $X = B_{1/7}(2)\cup B_{1/7}(5)\subset \mathbb{Q}_7$ and consider $f(x) =
\frac{x^2-1}{x}$. Then $f'(x) = \frac{x^2+1}{x^2}$. Since
$\big(\frac{-1}{7}\big)=-1$, we have $|x^2+1|=1$ for all $x\in X$.
Thus, $f$ is uniformly locally isometric and we can take $r=7^{-1}$.
\end{exmp}

The dynamics of $f$ on $7^{-2}$-balls is shown in Figure~\ref{F:sevenballs}. The corresponding digraph is shown in Figure~\ref{F:digraph}. The digraph is the disjoint union of three cycles of length 2,6,6, respectively.

\begin{figure}[htb]

\begin{center}
    \psfrag{A}[]{$B_{7^{-2}}(2)$}
    \psfrag{B}[]{$B_{7^{-2}}(9)$}
    \psfrag{C}[]{$B_{7^{-2}}(16)$}
    \psfrag{D}[]{$B_{7^{-2}}(23)$}
    \psfrag{E}[]{$B_{7^{-2}}(30)$}
    \psfrag{F}[]{$B_{7^{-2}}(37)$}
    \psfrag{G}[]{$B_{7^{-2}}(44)$}
    \psfrag{a}[]{$B_{7^{-2}}(5)$}
    \psfrag{b}[]{$B_{7^{-2}}(12)$}
    \psfrag{c}[]{$B_{7^{-2}}(19)$}
    \psfrag{d}[]{$B_{7^{-2}}(26)$}
    \psfrag{e}[]{$B_{7^{-2}}(33)$}
    \psfrag{f}[]{$B_{7^{-2}}(40)$}
    \psfrag{g}[]{$B_{7^{-2}}(47)$}
    \psfrag{P}[]{$B_{7^{-1}}(2)$}
    \psfrag{Q}[]{$B_{7^{-1}}(5)$}
 \includegraphics[width=13cm]{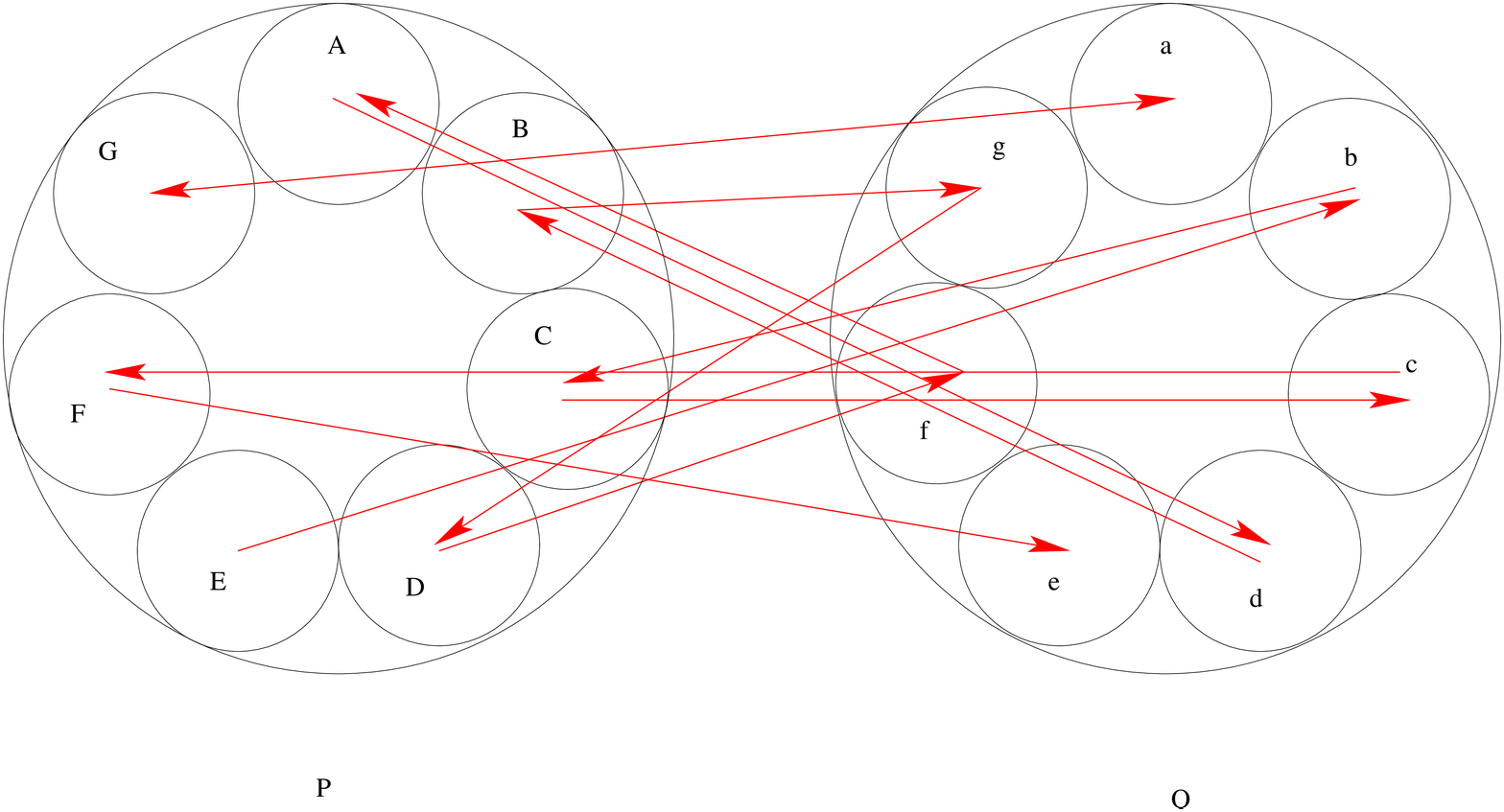}\\

\end{center}
\caption { }
 \label{F:sevenballs}
\end{figure}

\begin{figure}[htb]

\begin{center}
\psfrag{K}[]{$G(f,7^{-2})$}
 \includegraphics[width=6.5cm]{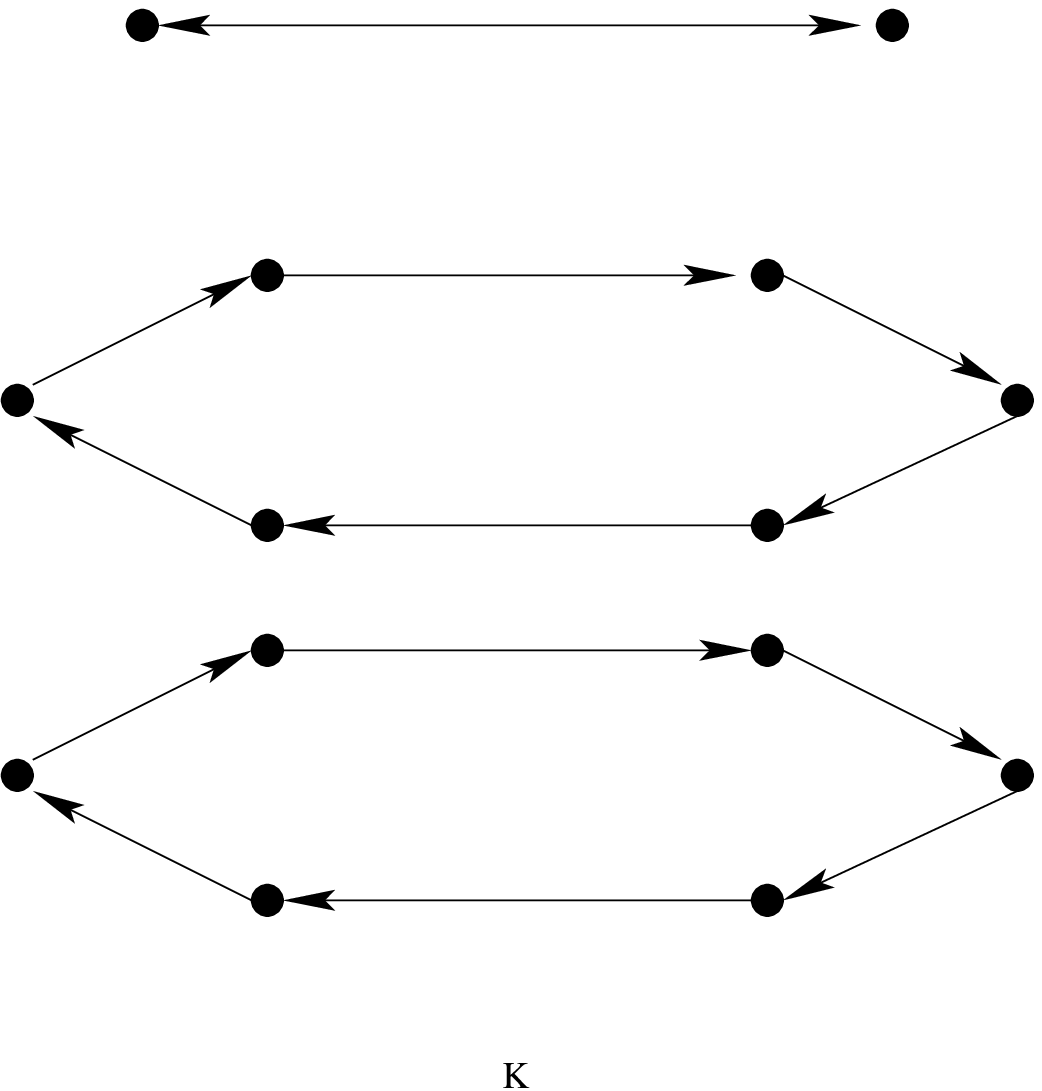}\\

\end{center}
\caption { }
 \label{F:digraph}
\end{figure}

The following result gives a characterization of measure-preserving
locally 1-Lipschitz maps. It serves as a generalization of Corollary
2.4 in~\cite{A06}.

\begin{thm}\label{m-p}
Let $X$ be a compact open subset of $\mathbb{Q}_p$ and let
$f:X\rightarrow X$ be a locally 1-Lipschitz rational function. The following
statements are equivalent.
\begin{enumerate}
\item $f$ is measure-preserving.
\item $f$ is  invertible.
\item $f$ is invertible and locally isometric.
\item For all $t\leq l$, $G(f, p^t)$ is a disjoint union of cycles.
\end{enumerate}
\end{thm}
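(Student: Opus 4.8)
The plan is to make the permutation structure of the induced maps on balls the central object. For each $t\le l$ write $X=\bigsqcup_{i=1}^{m}D_{t,i}$ and let $\phi_t\colon\{1,\dots,m\}\to\{1,\dots,m\}$ be the map determined by $f(D_{t,i})\subset D_{t,\phi_t(i)}$, so that $G(f,p^t)$ is precisely the functional digraph of $\phi_t$. Since every vertex already has outdegree one, $G(f,p^t)$ is a disjoint union of cycles if and only if $\phi_t$ is a bijection; thus statement (4) is the assertion that each $\phi_t$ is a permutation. The key computational observation, valid because $t\le l$ forces $f$ to be $1$-Lipschitz on each $D_{t,i}$ (so $f(D_{t,i})$ lands in a single ball of radius $\le p^t$), is that $f^{-1}(D_{t,j})=\bigsqcup_{i\,:\,\phi_t(i)=j}D_{t,i}$, whence $\mu(f^{-1}(D_{t,j}))=p^t\,\#\phi_t^{-1}(j)$.

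I would first establish $(1)\Leftrightarrow(4)$, which I regard as the engine of the theorem. If $f$ is measure-preserving, then $p^t=\mu(D_{t,j})=\mu(f^{-1}(D_{t,j}))=p^t\,\#\phi_t^{-1}(j)$ forces $\#\phi_t^{-1}(j)=1$ for every $j$, so $\phi_t$ is a permutation and (4) holds. Conversely, if every $\phi_t$ is a permutation, the same formula gives $\mu(f^{-1}(D_{t,j}))=p^t=\mu(D_{t,j})$ for every ball of radius at most $p^l$. These balls form a $\pi$-system generating $\mathscr{B}(X)$, and the finite measures $E\mapsto\mu(f^{-1}(E))$ and $\mu$ agree on it (and both assign $\mu(X)$ to $X$), so they coincide and $f$ is measure-preserving. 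I expect this converse --- upgrading the ball-by-ball identities to a genuine measure-preservation statement --- to be the main obstacle, since it is the one step requiring a measure-uniqueness (Dynkin) argument rather than finite combinatorics.

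It then remains to weave in (2) and (3). For $(1)\Rightarrow(3)$ I would combine the Lipschitz hypothesis $|f'|\le 1$ with the Remark following Proposition~\ref{P:local-scal}, which yields $|f'|\ge 1$ for measure-preserving maps; hence $|f'|\equiv 1$ and $f$ is locally isometric by Proposition~\ref{criteria}. Invertibility then follows by reading off the level-$l$ permutation: local isometry makes each image $f(D_{l,i})$ a \emph{full} ball $D_{l,\phi_l(i)}$, so $\phi_l$ being a bijection gives both injectivity (distinct balls have disjoint full images, and $f$ is injective within each ball) and surjectivity (the images exhaust $X$). The implication $(3)\Rightarrow(2)$ is immediate. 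Finally $(2)\Rightarrow(4)$ is the cleanest step: surjectivity of $f$ forces each $\phi_t$ to be surjective --- any $y\in D_{t,j}$ has a preimage lying in some $D_{t,i}$, forcing $\phi_t(i)=j$ --- and a surjective self-map of a finite set is a bijection. Assembling $(4)\Rightarrow(1)\Rightarrow(3)\Rightarrow(2)\Rightarrow(4)$, together with the direct $(1)\Leftrightarrow(4)$, closes the loop.
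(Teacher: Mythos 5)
Your proposal is correct in outline and takes a route that genuinely differs from the paper's in two places, but it contains one under-justified step. The differences first. For $(4)\Rightarrow(1)$ you argue directly: the identity $f^{-1}(D_{t,j})=\bigsqcup_{i:\phi_t(i)=j}D_{t,i}$ together with the permutation property gives $\mu(f^{-1}(B))=\mu(B)$ for every ball of radius at most $p^l$, and a $\pi$-system uniqueness argument finishes. The paper instead reaches $(1)$ only at the end of the chain $(4)\Rightarrow(2)$, $[(2),(4)]\Rightarrow(3)$, $[(3),(4)]\Rightarrow(1)$, using invertibility and local isometry to identify $f^{-1}(B_{p^t}(a))$ with a single ball; your version needs neither and makes explicit the measure-uniqueness step that the paper leaves implicit. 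Likewise, you obtain local isometry from $(1)$ by squeezing $|f'|$ between the Lipschitz bound $|f'|\leq 1$ and the lower bound $|f'|\geq 1$ from the Remark after Proposition~\ref{P:local-scal}, whereas the paper derives it combinatorially from $(2)$ and $(4)$ by showing that $f$ maps each sphere $S_{p^t}(y)$ onto $S_{p^t}(f(y))$. Both work; note only that the Remark's hypothesis $f'(a)\neq 0$ is harmless here, since $f'(a)=0$ is also incompatible with measure preservation by the same ball-shrinking argument applied to Proposition~\ref{P:local-scal}(2).

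The gap is in your surjectivity argument: you assert that local isometry makes $f(D_{l,i})$ a \emph{full} ball $D_{l,\phi_l(i)}$. That is true but not free. An isometric embedding of a ball into a ball of the same radius need not be onto in general (it fails over $\mathbb{Q}$, for instance); the statement uses the completeness of $\mathbb{Q}_p$, equivalently the compactness of the balls, and this is precisely the analytic content of surjectivity that the paper spends a paragraph on. You must either invoke the fact that an isometric self-map of a compact metric space is surjective (after translating $D_{l,\phi_l(i)}$ onto $D_{l,i}$), or run the paper's argument: for $a\in D_{l,\phi_l(i)}$ and each $t\leq l$, the permutation property of $\phi_t$ produces a unique ball $D_{t,i_t}\subset D_{l,i}$ with $f(D_{t,i_t})\subset B_{p^t}(a)$; these balls are nested, their radii shrink to $0$, and completeness gives a point in their intersection, which maps to $a$ by continuity. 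With that paragraph supplied, your cycle $(4)\Rightarrow(1)\Rightarrow(3)\Rightarrow(2)\Rightarrow(4)$ closes and the proof is complete.
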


\begin{proof}
(1)$\Rightarrow$(4): Note that $\sum_{i=1}^m
d^-(A_{t,i})=\sum_{i=1}^m d^+(A_{t,i})=m$.
Now suppose that $d^-(A_{t,i})\geq 2$ for some $i$. Say, \[(A_{t,j_1},
A_{t,i}), (A_{t,j_2}, A_{t,i}) \in E(G(f, p^t)).\] Then
$D_{t,j_1}\cup D_{t, j_2}\subset f^{-1}(D_{t,i})$. Thus
\[\mu(D_{t,i})=\mu(f^{-1}(D_{t,i}))\geq
\mu(D_{t,j_1})+\mu(D_{t,j_2})=2\mu(D_{t,i}),\] a contradiction.
So $d^-(A_{t,i})=1$ for all $1\leq i\leq m$. By simple graph theory, if every vertex in a digraph has indegree
and outdegree 1, then the digraph must be a disjoint union of
cycles.\\

\noindent(4)$\Rightarrow$(2): Let $a,b\in X$. There exists an
integer $t<l$ such that $|a-b|>p^t$. Then $a,b$ correspond to
different vertices in the digraph $G(f, p^t)$. Since $G(f, p^t)$ is
a disjoint union of cycles, the images of $a, b$ also correspond to
different vertices. In particular, $f(a)\neq f(b)$.\\
It remains to show that $f$ is surjective. Let $a\in X$. Since every
vertex in the digraph $G(f,p^t)$ has indegree 1, there exists a unique
disk $D_{t,i_t}$ such that $f(D_{t,i_t})\subset B_{p^t}(a)$. We
obtain an infinite nested sequence $D_{l, i_l}\supset D_{l-1,
i_{l-1}}\supset \cdots$. Note that $\lim_{t\rightarrow
-\infty}\mu(D_{t,i_t})=0$. By the completeness of $\mathbb{Q}_p$,
$\bigcap_{t\leq l}D_{t,i_t}$ consists of a single point. This point
is the inverse image of $a$.\\

\noindent(2)$\Rightarrow$(4): Since $f$ is bijective, each vertex
has indegree $d^-\geq 1$. Hence $d^-(A_{t,i})=1=d^+(A_{t,i})$ for
all $i$. Thus $G(f,p^t)$ is a disjoint union of cycles.\\

\noindent[(2) and (4)$]\Rightarrow$(3): We show that $f$ is locally isometric with $r = p^l$.\\
Let $x,y\in X$ with $|x-y|=p^t$ ($t\leq l$). Then
$f(B_{p^t}(y))\subset B_{p^t}(f(y))$. Since the digraph $G(f,
p^t)$ is a disjoint union of cycles, each vertex has indegree 1.
Thus $f^{-1}(B_{p^t}(f(y)))\subset B_{p^t}(y)$. Hence $f:
B_{p^t}(y)\rightarrow B_{p^t}(f(y))$ is a bijection. Similarly,
$f:B_{p^{t-1}}(y)\rightarrow B_{p^{t-1}}(f(y))$ is a bijection.
So $f(S_{p^t}(y))=S_{p^t}(f(y))$. In particular, $f(x)\in
S_{p^t}(f(y))$; i.e., $|f(x)-f(y)|=p^t=|x-y|$.\\

\noindent(3)$\Rightarrow$(2): Trivial.\\

\noindent[(3) and (4)$]\Rightarrow$(1): Since $X$ is compact, $f$ is
uniformly locally isometric. Let $r=p^l$ be the constant involved in
Definition ~\ref{local-isom}. Then for any integer $t\leq l$ and
any $a\in X$, we have $B_{p^t}(f^{-1}(a))= f^{-1}(B_{p^t}(a))$
(similar to the ``(2)$\Rightarrow$(3)'' part). Hence
$\mu(f^{-1}(B_{p^t}(a)))=\mu(B_{p^t}(a))$. This means $f$
preserves the measure of balls. Therefore, $f$ is
measure-preserving.
\end{proof}

We also have the following characterization of ergodic locally
1-Lipschitz maps. This is a generalization of Proposition 4.1
in~\cite{A06}.

\begin{thm}\label{erg}
Let $X$ be a compact open subset of $\mathbb{Q}_p$ and let
$f:X\rightarrow X$ be a locally 1-Lipschitz rational function. Then the following
statements are equivalent.
\begin{enumerate}
\item $f$ is measure-preserving and ergodic.
\item $f$ is minimal.
\item For all $t\leq l$, $G(f, p^t)$ consists of a single cycle.
\end{enumerate}
\end{thm}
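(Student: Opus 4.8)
The plan is to prove the three equivalences by establishing the four implications (1)$\Rightarrow$(3), (3)$\Rightarrow$(1), (2)$\Rightarrow$(3), and (3)$\Rightarrow$(2); together these close the loop, since (1)$\Rightarrow$(3)$\Rightarrow$(2) and (2)$\Rightarrow$(3)$\Rightarrow$(1). Throughout I would lean on Theorem~\ref{m-p}: a single cycle is in particular a disjoint union of cycles, so any instance of condition (3) already forces $f$ to be measure-preserving, invertible, and locally isometric. In that situation $f$ permutes the balls $D_{t,1},\dots,D_{t,m}$ at level $t$ according to a permutation $\sigma=\sigma_t$ with $f(D_{t,i})=D_{t,\sigma(i)}$ (a genuine bijection of balls, as extracted from the proof of Theorem~\ref{m-p}), and condition (3) says exactly that each $\sigma_t$ is a single $m(t)$-cycle.

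For the topological equivalence (2)$\Leftrightarrow$(3) I would argue purely in terms of the functional digraph $G(f,p^t)$, all of whose outdegrees are $1$. For (3)$\Rightarrow$(2), given $x\in X$ and a nonempty open $U$, I choose $t\leq l$ small enough that $U$ contains some ball $D_{t,j}$; writing $x\in D_{t,i}$, the single-cycle hypothesis yields an $n$ with $\sigma_t^{\,n}(i)=j$, hence $f^n(x)\in D_{t,\sigma_t^n(i)}=D_{t,j}\subset U$, so every forward orbit is dense. For (2)$\Rightarrow$(3), I use that in a digraph with all outdegrees $1$ every vertex flows into a unique cycle, so each weakly connected component contains exactly one cycle with trees feeding into it: if $G(f,p^t)$ had two distinct cycles, or a vertex $j$ lying off its cycle, then an orbit started in one cycle's balls would never enter the other component, respectively never reach $D_{t,j}$, contradicting the density of that orbit. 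Hence $G(f,p^t)$ is a single cycle for every $t\le l$.

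For (1)$\Rightarrow$(3) I argue by contraposition inside the measure-preserving world: by Theorem~\ref{m-p} each $G(f,p^t)$ is a disjoint union of cycles, and if for some $t$ there were at least two cycles, then the union of the balls belonging to one cycle would be an $f$-invariant set of measure strictly between $0$ and $\mu(X)$, contradicting ergodicity. The substantive direction is (3)$\Rightarrow$(1). Measure preservation is immediate from Theorem~\ref{m-p}, so the content is ergodicity. Let $A$ be $f$-invariant with $\mu(A)>0$. The key observation is an equidistribution identity: since $f\colon D_{t,i}\to D_{t,\sigma(i)}$ is a measure-preserving bijection and $f(A)=A$, one gets $\mu(A\cap D_{t,\sigma(i)})=\mu\big(f(A\cap D_{t,i})\big)=\mu(A\cap D_{t,i})$; because $\sigma=\sigma_t$ is a single cycle, all the numbers $\mu(A\cap D_{t,i})$ are equal, so the density $\mu(A\cap D_{t,i})/\mu(D_{t,i})$ equals $\mu(A)/\mu(X)$ in every ball at every level $t\le l$. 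Letting $t\to-\infty$ and invoking the Lebesgue density theorem on $\mathbb{Q}_p$ (for a.e.\ point of $A$ this density tends to $1$) forces $\mu(A)/\mu(X)=1$, i.e.\ $A$ has full measure.

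The main obstacle is this last ergodicity step: the other implications are essentially bookkeeping with the outdegree-$1$ digraph, but deducing ergodicity from the purely combinatorial single-cycle condition requires passing from the permutation action on finitely many balls at each fixed level to a statement about arbitrary measurable invariant sets. The equidistribution identity above is what bridges the two, and the one external input I need is the Lebesgue density theorem for the Haar measure $\mu$ on $\mathbb{Q}_p$ (equivalently, a martingale/approximation argument showing that unions of level-$t$ balls generate $\mathscr{B}(X)$ up to null sets); I should take care to cite this in the form that the balls $B_{p^t}(x)$ shrink to $x$ as $t\to-\infty$.
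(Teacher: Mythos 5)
Your proof is correct, but it routes the argument differently from the paper in two places. First, the paper closes the loop as (1)$\Rightarrow$(2)$\Rightarrow$(3)$\Rightarrow$(1), deducing minimality from ergodicity via the local isometry property (if $\mu(f^n(B_\rho(x))\cap B_\rho(y))>0$ then $|f^n(x)-y|\le\rho$), whereas you prove (2)$\Leftrightarrow$(3) purely combinatorially from the outdegree-one digraph; your version of (3)$\Rightarrow$(2) is arguably cleaner since it never invokes measure theory. Second, and more substantively, for ergodicity the paper proves \emph{unique ergodicity}: since $G(f,p^t)$ is a single cycle and $f$ is bijective, $X=\bigsqcup_{i=0}^{n-1}f^i(B)$ for any ball $B$ of radius $p^t\le p^l$, so any $f$-invariant probability measure must assign $1/n$ to every such ball and hence coincide with normalized Haar measure. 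Your argument instead fixes an invariant set $A$, uses the fact that $f$ is a measure-preserving bijection between level-$t$ balls to show $\mu(A\cap D_{t,i})/\mu(D_{t,i})$ is the constant $\mu(A)/\mu(X)$ at every level, and then invokes the Lebesgue density theorem to force $\mu(A)\in\{0,\mu(X)\}$. Both are valid; the paper's route is more self-contained (no differentiation theorem needed, and the equidistribution identity $X=\bigsqcup f^i(B)$ is immediate from the single-cycle hypothesis plus Corollary~\ref{bij-isom}-type bijectivity) and yields the strictly stronger conclusion of unique ergodicity, while yours stays closer to the definition of ergodicity and makes the role of the single-cycle condition as an equidistribution statement explicit. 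If you keep your version, do state precisely the bijectivity and measure-preservation of $f\colon D_{t,i}\to D_{t,\sigma(i)}$ (it follows from the local isometry supplied by Theorem~\ref{m-p}) and cite the Lebesgue density theorem for Haar measure on $\mathbb{Q}_p$ in the form of convergence along the filtration by balls $B_{p^t}(x)$ as $t\to-\infty$.
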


\begin{proof}
(1)$\Rightarrow$(2):  By Theorem~\ref{m-p}, we know that $f$ is invertible and
(uniformly) locally isometric. Let $r=p^l$ be the corresponding
constant defined in Definition ~\ref{local-isom}.

Let $\varepsilon>0$ and let $x,y\in X$. Consider $B_1=B_{\rho}(x)$
and $B_2=B_{\rho}(y)$ with $0< \rho < \min\{\varepsilon, p^l\}$. By the
ergodicity of $f$, there exists $n>0$ such that $\mu(f^n(B_1)\cap
B_2)>0$. As $f$ is locally isometric in each $\rho$-ball, we have
$f^n(B_1)\subset B_{\rho}(f^n(x))$. Thus $|f^n(x)-y|\leq
\rho<\varepsilon$. By the arbitrariness of $\varepsilon$, $x$, and
$y$, we conclude that $f$ is minimal.\\

\noindent(2)$\Rightarrow$(3): Let $A_{t,1}, \cdots, A_{t,m}$ denote
the vertices of $G(f,p^t)$. Suppose $d^-(A_t,i)=0$ for some $i$.
Then $D_{t,i}\cap \{f^n(x)\,|\,n\geq 0 \}=\emptyset$, for all $x\in
X\backslash D_{t,i}$, which contradicts the minimality of $f$.
So $d^-(A_{t,i})\geq 1$ for all $1\leq i\leq m$. Since $\sum_{i=1}^m
d^-(A_{t,i})=m$, $d^-(A_{t,i})=1$ for all $i$. Thus $G(f,p^t)$ is a
disjoint union of cycles. By the minimality of $f$, there is only one
cycle.\\

\noindent(3)$\Rightarrow$(1): By Theorem~\ref{m-p}, $f$ is
invertible, measure-preserving and (uniformly) locally isometric.
Let $r=p^l$ be the corresponding constant defined in Definition ~\ref{local-isom}.

Now we show that $f$ is uniquely ergodic, and thus ergodic. We need to show that the normalized Haar measure $\bar{\mu}$ (i.e., $\bar{\mu}:=\mu/\mu(X)$) is the only $f$-invariant probability measure on $X$. Let $\nu$ be any $f$-invariant probability measure on $X$. Consider a ball $B$ of radius $r=p^t\leq p^l$. Since $G(f, p^t)$ consists of a single cycle and $f$ is bijective, we can write $X=\bigsqcup_{i=0}^{n-1}f^i(B)$ where $n = \mu(X)/\mu(B)=1/\bar{\mu}(B)$. So $1=\sum_{i=0}^{n-1}\nu(f^i(B))=n\nu(B)$. Then $\nu(B)=1/n=\bar{\mu}(B)$. This means $\bar{\mu}$ and $\nu$ agree on all balls of radii $\leq p^l$. Therefore, $\nu=\bar{\mu}$, as desired.
\end{proof}

We now see some applications. By Theorem~\ref{m-p} (respectively, 
Theorem~\ref{erg}), to show that $f$ is not measure-preserving
(respectively, not ergodic), it suffices to find $t\leq l$ such that
$G(f,p^t)$ is not a union of cycles (respectively, not a single cycle). All
these can be done by computer within a reasonable time.

\begin{exmp}
Let $X$ and $f$ be the same as in example ~\ref{examp}. We have already seen that $G(f, 7^{-2})$ consists of three disjoint cycles. So $f$ is not ergodic on $X$. We will show in the next section that $f$ is measure-preserving.
\end{exmp}

\section{Subsidiary Digraph of Locally 1-Lipschitz Rational Functions}\label{S:subsidiary}

By Theorem ~\ref{m-p}, a locally 1-Lipschitz rational function is measure-preserving if and only if the digraph $G(f,p^t)$ is a disjoint union of cycles for every $t\leq l$. Practically, it is impossible to check infinitely many digraphs. In this section, we construct a subsidiary digraph $G^{\ast}$ for locally
1-Lipschitz rational function $f$ on compact open sets $X\subset
\mathbb{Q}_p$. With the help of $G^{\ast}$, we only need to work on finitely many of $G(f, p^t)$.

Throughout this section, we assume that $f$ is locally 1-Lipschitz on $X$. By Proposition~\ref{criteria}, this is equivalent to
\[|f'(a)|=\frac{|P'(a)Q(a)-P(a)Q'(a)|}{|Q(a)|^2} \leq 1\text{ for all }a\in X.\]

We keep the notations of $r, l, t, D_{t,i}, A_{t,i}$ as in Section ~\ref{S:digraph}. By definition, $|f(x)-f(y)|=|f'(a)|\cdot|x-y|$ whenever $|x-y|\leq p^l$ and $f'$
has no root in $B_{p^l}(x)$. Define $V(G^{\ast}) = \{A_{t,1},
A_{t,2}, \cdots, A_{t,m}\}$. We construct directed edges as follows:

\begin{enumerate}
\item For each $t\leq l$, choose a set of points $S_t=\{a_{t,1}, a_{t,2},\cdots, a_{t,m}\}\subset
X$ such that
\begin{itemize}
\item $a_{t,i}\in D_{t,i}$ ($1\leq i\leq m$), and
\item $S_l\subset S_{l-1}\subset S_{l-2}\subset \cdots$.
\end{itemize}
The elements of $S_t, t\leq l$ are called the \textbf{representatives}.

\item Suppose $f(D_{t,i})\subset D_{t,j}$. Choose the least $s\in \mathbb{Z}_{\geq
0}$ such that \[P(p^sx+a_{t,i})-(p^sy+a_{t,j})Q(p^sx+a_{t,i})\in
\mathbb{Z}_p[x,y].
\]
Then we join $A_{t,i}$ to $A_{t,j}$ if and only if
\[p^t<\min\left\{p^{-s}, p^l|f'(a_{t,i})|, \frac{|Q(a_{t,i})|\cdot|f'(a_{t,i})|}{|Q'(a_{t,i})|}, p^{-2s}|Q(a_{t,i})|\cdot|f'(a_{t,i})|^2\right\}.\]
\end{enumerate}

\noindent\textbf{Remark.} The construction of $G^{\ast}(f,p^t)$
depends on the choice of $S_t$'s.\\

The digraph $G^{\ast}(f, p^t)$ is called the \textbf{subsidiary
digraph} of $f$. It is clear that $G^{\ast}(f, p^t)$ is a subgraph
of $G(f,p^t)$. The following proposition shows that, under certain
condition, $G^{\ast}$ coincides with $G$ for sufficiently small
$t$.

\begin{prop}\label{intrinsic-level}
Suppose $f'(x)$ has no root in $X$. Then there exists $t\leq l$
such that $G(f, p^t)= G^{\ast}(f, p^t)$.
\end{prop}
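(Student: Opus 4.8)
The plan is to exploit two facts already recorded: $G^{\ast}(f,p^t)$ is a subgraph of $G(f,p^t)$ on the same vertex set, and every vertex of $G(f,p^t)$ has outdegree exactly $1$. Hence the unique edge out of $A_{t,i}$ in $G(f,p^t)$ goes to the vertex $A_{t,j}$ with $f(D_{t,i})\subset D_{t,j}$, and this edge lies in $G^{\ast}(f,p^t)$ precisely when
\[
p^t<\min\left\{p^{-s},\,p^l|f'(a_{t,i})|,\,\frac{|Q(a_{t,i})|\,|f'(a_{t,i})|}{|Q'(a_{t,i})|},\,p^{-2s}|Q(a_{t,i})|\,|f'(a_{t,i})|^2\right\}
\]
holds, where $s=s(t,i)$. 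Thus $G(f,p^t)=G^{\ast}(f,p^t)$ as soon as this inequality holds for \emph{every} $i$. So I would produce a single positive constant $c$, independent of $t$ and $i$, bounding the right-hand side below for all $i$ once $t$ is past some threshold; then any $t$ with $p^t<c$ below that threshold finishes the proof.

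For the two terms not involving $s$, I would use compactness of $X$ with the hypotheses. Since $f'$ has no root in $X$ and $X$ is compact, the continuous function $a\mapsto|f'(a)|$ attains a positive minimum $\delta>0$; likewise $Q$ has no root in $X$ (a standing requirement for $f$ to be well defined), so $|Q|$ attains a positive minimum, and the continuous $|Q'|$ attains a finite maximum $B$. Consequently $p^l|f'(a_{t,i})|\geq p^l\delta$ and $\frac{|Q(a_{t,i})|\,|f'(a_{t,i})|}{|Q'(a_{t,i})|}$ is bounded below by a positive constant (read as $+\infty$ wherever $Q'(a_{t,i})=0$), both bounds uniform in $t$ and $i$.

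The main obstacle is to bound $s(t,i)$ uniformly, since $p^{-s}$ and $p^{-2s}|Q|\,|f'|^2$ shrink as $s$ grows. I would expand, writing $a=a_{t,i}$, $b=a_{t,j}$,
\[
P(p^sx+a)-(p^sy+b)Q(p^sx+a)=\sum_{k}\frac{P^{(k)}(a)-bQ^{(k)}(a)}{k!}\,p^{sk}x^k-p^s y\sum_{k}\frac{Q^{(k)}(a)}{k!}\,p^{sk}x^k,
\]
so membership in $\mathbb Z_p[x,y]$ reduces to finitely many coefficient conditions. The key point is that each Hasse derivative $\tfrac{1}{k!}P^{(k)},\tfrac{1}{k!}Q^{(k)}$ lies in $\mathbb Z_p[x]$, and $X$, being compact, is bounded, say $|a|\leq p^N$ on $X$; hence $\bigl|\tfrac{P^{(k)}(a)-bQ^{(k)}(a)}{k!}\bigr|$ and $\bigl|\tfrac{Q^{(k)}(a)}{k!}\bigr|$ are bounded by a constant $p^M$ depending only on $P,Q,N$. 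Every coefficient with $k\geq 1$ becomes integral once $s\geq M$, so the only $s$-independent constraint is the constant term $P(a)-bQ(a)=Q(a)\bigl(f(a)-b\bigr)$; since $f(a)\in D_{t,j}$ and $b=a_{t,j}\in D_{t,j}$ we have $|f(a)-b|\leq p^t$, whence $|P(a)-bQ(a)|\leq(\max_X|Q|)\,p^t\leq 1$ for all $t$ past some threshold $t_1$. Therefore, for $t\leq t_1$ the integer $s(t,i)$ exists and satisfies $s(t,i)\leq M$ uniformly.

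With $s\leq M$ uniform, $p^{-s}\geq p^{-M}$ and $p^{-2s}|Q(a_{t,i})|\,|f'(a_{t,i})|^2\geq p^{-2M}(\min_X|Q|)\,\delta^2$, again uniform positive lower bounds. Taking $c$ to be the minimum of $p^{-M}$, $p^l\delta$, the third-term bound, and $p^{-2M}(\min_X|Q|)\,\delta^2$, I choose any $t\leq t_1$ with $p^t<c$; for this $t$ the inequality holds for every $i$, so every edge of $G(f,p^t)$ lies in $G^{\ast}(f,p^t)$ and the two digraphs coincide. The step I expect to require the most care is the uniform bound on $s(t,i)$, which rests on the compactness (hence boundedness) of $X$ and the integrality of the Hasse derivatives of $P$ and $Q$; the hypothesis that $f'$ has no root in $X$ is what forces $\delta>0$, without which the terms $p^l|f'|$ and $p^{-2s}|Q|\,|f'|^2$ could fail to be bounded below.
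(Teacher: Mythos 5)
Your proof is correct, and it reaches the paper's goal — verifying the edge-defining inequality for every $i$ simultaneously once $t$ is small enough — by a genuinely different mechanism. The paper argues pointwise: around each $a\in X$ it produces a ball $B_{r(a)}(a)$ on which $s$, $|f'|$, $|Q|$, $|Q'|$ are all locally constant (using Lemma~\ref{local-same-norm}, plus an unproved assertion that $s(a',b)$ is locally constant near $(a,f(a))$), and then extracts a finite subcover to obtain a uniform threshold. You instead bound everything globally: the extreme-value theorem on the compact set $X$ handles the terms $p^l|f'|$ and $|Q|\,|f'|/|Q'|$, and you replace the paper's local-constancy claim for $s$ by an explicit uniform upper bound $s\leq M$, obtained from the integrality of the Hasse derivatives of $P$ and $Q$ together with the boundedness of $X$, after isolating the constant term $Q(a)\bigl(f(a)-a_{t,j}\bigr)$ as the only $s$-independent obstruction. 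The treatment of $s$ is the substantive difference: your version is effective (it yields a computable threshold for $t$) and supplies the detail the paper glosses over, whereas the paper's version is shorter once one grants that the sets $\{(a,b)\,:\,T_{a,b,s}\in\mathbb{Z}_p[x,y]\}$ are clopen and increasing in $s$, so that $s(a,b)$ is indeed locally constant. The only cosmetic point to add is that the final $t$ should also be taken $\leq l$ (as the statement requires), which costs nothing since you may shrink $t$ arbitrarily.
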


The largest such $t$ is called the \textbf{intrinsic level} of $f$. We denote it by $t_0$.

In order to prove the proposition, we need the following lemma.

\begin{lem}\label{local-same-norm}
Let $F(x)\in \mathbb{Q}_p[x]$ be a polynomial and let $a\in X$.
\begin{enumerate}
\item\label{one} If $F(a)\neq 0$, then there exists $r>0$ such that
$|F(x)|=|F(a)|$ whenever $a\in B_r(a)$.
\item\label{two} If $F(a)=0$, then for any $r_0>0$ there exists $r>0$ such that
$|F(x)|\leq r_0$ whenever $a\in B_r(a)$.
\end{enumerate}
\end{lem}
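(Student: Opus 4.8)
The plan is to expand $F$ as a finite Taylor series about $a$ and then apply the ultrametric inequality. Since $F\in\mathbb{Q}_p[x]$ has finite degree $n=\deg F$, I can write, as a purely algebraic identity,
\[
F(x)=\sum_{i=0}^{n}c_i(x-a)^i,\qquad c_i=\frac{1}{i!}\frac{d^iF}{dx^i}(a)\in\mathbb{Q}_p,
\]
so that $c_0=F(a)$ and $F(x)-F(a)=\sum_{i=1}^n c_i(x-a)^i$. Because there are only finitely many coefficients, the quantity $M=\max_{1\le i\le n}|c_i|$ is finite; this finiteness is exactly what lets the argument go through directly, without the Gauss-norm estimate needed in Proposition~\ref{P:local-scal} for the genuinely analytic case.

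First I would restrict attention to radii $r\le 1$, so that $|x-a|^i\le|x-a|\le r$ for every $i\ge 1$ whenever $x\in B_r(a)$. The strong triangle inequality then gives
\[
\Big|\sum_{i=1}^n c_i(x-a)^i\Big|\le\max_{1\le i\le n}|c_i|\,|x-a|^i\le M\,r .
\]
For part~(1), where $F(a)\neq 0$, I choose $r>0$ with $r\le 1$ and $Mr<|F(a)|$. Then the correction term has absolute value strictly smaller than $|F(a)|$, so by the ultrametric property $|F(x)|=|F(a)+\sum_{i\ge 1}c_i(x-a)^i|=|F(a)|$ for all $x\in B_r(a)$. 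For part~(2), where $F(a)=0$, the same bound reads $|F(x)|\le Mr$, so given $r_0>0$ I take $r=\min\{1,\,r_0/M\}$ to obtain $|F(x)|\le r_0$ on $B_r(a)$.

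There is essentially no hard step here: the entire content is that the higher-order Taylor terms of a polynomial are uniformly controlled by a single constant $M$ times $r$, which forces $|F|$ to be locally constant near a nonzero value and locally small near a zero. The one point deserving a sentence is the degenerate case $M=0$, i.e.\ $F$ constant: in part~(1) any $r$ works, and in part~(2) the hypothesis $F(a)=0$ forces $F\equiv 0$, so the claim is trivial. I would also remark that this lemma is the polynomial analogue of Proposition~\ref{P:local-scal} and is precisely the ingredient needed to show that $|f'|$, $|Q|$, and $|Q'|$ are locally constant away from their zeros, which drives the comparison between $G$ and $G^{\ast}$ in Proposition~\ref{intrinsic-level}.
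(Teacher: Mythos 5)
Your proof is correct, and it takes a genuinely different and more elementary route than the paper. The paper deduces the lemma from Proposition~\ref{P:local-scal}, splitting into the cases $F'(a)\neq 0$ and $F'(a)=0$: in the first case it uses the exact local scaling identity $|F(x)-F(a)|=|F'(a)|\,|x-a|$ on a small ball, and in the second it uses the Lipschitz estimate $|F(x)-F(a)|<|x-a|$; in either case one then shrinks $r$ until the difference is dominated by $|F(a)|$. You instead write the finite Taylor expansion $F(x)=\sum_{i=0}^{n}c_i(x-a)^i$ exactly (legitimate, since $F$ is a polynomial) and bound the tail by $Mr$ with $M=\max_{1\le i\le n}|c_i|$ and $r\le 1$, then invoke the ultrametric inequality. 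This avoids any appeal to the analytic machinery and, more to the point, avoids the case split on whether $F'(a)$ vanishes, which the paper cannot escape because Proposition~\ref{P:local-scal} degenerates at critical points. Your treatment of the degenerate case $M=0$ and the observation that finiteness of the degree is what replaces the Gauss-norm bound are both apt. The only caveat is cosmetic: the quantifier in the statement should read ``whenever $x\in B_r(a)$'' rather than ``$a\in B_r(a)$'', a typo in the paper that your argument implicitly corrects.
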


\begin{proof}
For part \eqref{one}, if $F'(a)\neq0$, by Proposition~\ref{P:local-scal}(1), there
exists $\rho>0$ such that $|F(x)-F(a)|=|F'(a)|\cdot|x-a|$ for all
$x\in B_{\rho}(a)$. Take $0<r<\min\{\rho, |F(a)|/|F'(a)|\}$. Then
$|F(x)-F(a)|<|F(a)|$ whenever $x\in
B_r(a)$. Thus $|F(x)|=|F(a)|$.

Now suppose $F'(a)=0$. By Proposition~\ref{P:local-scal}(2), there
exists $\rho>0$ such that $|F(x)-F(a)|<|x-a|$ for all $x\in
B_{\rho}(a)$. Take \[0<r<\min\{\rho, |F(a)|\}.\] Then
$|F(x)-F(a)|<|F(a)|$ whenever $x\in B_r(a)$. Thus $|F(x)|=|F(a)|$.
 The proof part \eqref{two} is similar.
\end{proof}

\begin{proof}[Proof of Proposition~\ref{intrinsic-level}]
For any $a_1, a_2\in X$, let $s(a,b)$ be the least nonnegative
integer $s$ such that
\[T_{a_1,a_2,s}(x,y)=P(p^sx+a_1)-(p^sy+a_2)Q(p^sx+a_1)\in
\mathbb{Z}_p[x,y].\]

Let $a\in X$. Then there exists $\rho_1(a)>0$ such that $s(a',b)$ is
constant for $a'\in B_{\rho_1(a)}(a)$ and $b\in
B_{\rho_1(a)}(f(a))$. We denote this constant by $s=s(a)$.\\

\textit{Claim}: For any $a\in X$, there exists $r(a) =
p^{t(a)}\leq p^l$ such that $r(a)<\min\left\{\rho_1(a), p^{-s},
p^l|f'(a')|, \frac{|Q(a')|\cdot|f'(a')|}{|Q'(a')|},
p^{-2s}|Q(a')|\cdot|f'(a')|^2 \right\}$ for all $a'\in
B_{r(a)}(a)$.

Let $F(x)=P'(x)Q(x)-P(x)Q'(x)$. Then $F(a)=f'(a)\cdot Q(a)^2\neq
0$.
If $Q'(a)\neq 0$, by Lemma~\ref{local-same-norm}(1), there exists
$\rho_2(a)>0$ such that $|F(a')|=|F(a)|$, $|Q(a')|=|Q(a)|$,
$|Q'(a')|=|Q'(a)|$ whenever $a'\in B_{\rho(a)}(a)$. Take $r(a)>0$ with
\begin{align*}
&r(a)<\\
&\min \left\{\rho_1(a),\rho_2(a), p^{-s}, p^l|f'(a)|,
\frac{|Q(a)|\cdot|f'(a)|}{|Q'(a)|},
p^{-2s}|Q(a)|\cdot |f'(a)|^2 \right \}.
\end{align*}
 Then the inequality in the claim
holds for
all $a'\in B_{r(a)}(a)$. Finally, 
if $Q'(a)=0$, we apply Lemma~\ref{local-same-norm}(2). The proof is
similar.

Now we  prove the proposition. It is clear that $X =
\bigcup_{a\in X}B_{r(a)}(a)$. Since $X$ is compact, we can find a
finite subcover $X =\bigcup_{k=1}^n B_{r(a_k)}(a_k)$. Let $r_0=
\min_{1\leq k\leq n}r(a_k)$ and $t=\log_p r_0$.
Suppose that  $a_{t,i}\in S_t$, and $f(D_{t,i})\subset
D_{t,j}$. By the definition of $\rho_1$, we know that $s=s(a_{t,i})$
is the least nonnegative integer such that
$P(p^sx+a_{t,i})-(p^sy+a_{t,j})Q(p^sx+a_{t,i})\in
\mathbb{Z}_p[x,y]$. By the definition of $r(a)$, we have
\[p^t<\min\{p^{-s}, p^l|f'(a_{t,i})|, \frac{|Q(a_{t,i})|
\cdot|f'(a_{t,i})|}{|Q'(a_{t,i})|},
p^{-2s}|Q(a_{t,i})|\cdot|f'(a_{t,i})|^2\}.\] This holds for all $a_{t,i}\in S_t$. Therefore, $G^{\ast}(f,p^t)=G(f,p^t)$.
\end{proof}

The following proposition explains the idea behind the 
the definition we chose for subsidiary digraphs.

\begin{prop}\label{bij}
Suppose that  $f'(x)$ has no root in $X$. If $(A_{t,i}, A_{t,j})\in
E(G^{\ast}(f,p^t))$, then
$f:B_{p^t/|f'(a_{t,i})|}(a_{t,i})\rightarrow D_{t,j}$ is a
bijection.
\end{prop}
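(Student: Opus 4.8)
The plan is to reduce the proposition to a single application of Hensel's lemma, with the four quantities in the edge condition playing four distinct and complementary roles. Write $a=a_{t,i}$, $b=a_{t,j}$, $c=|f'(a)|$, and let $s$ be the integer from the edge definition. Since $(A_{t,i},A_{t,j})\in E(G^{\ast})\subseteq E(G)$ we already have $f(D_{t,i})\subseteq D_{t,j}$, so $f(a)\in D_{t,j}=B_{p^t}(b)$ and $|f(a)-b|\le p^t$. I would first dispose of the easy containment: the second term gives $p^t/c=p^t/|f'(a)|<p^l$, so $B_{p^t/c}(a)$ has radius below $p^l$ and therefore sits inside the radius-$p^l$ ball around $a$, one of the pieces of the level-$l$ decomposition of $X$; hence $B_{p^t/c}(a)\subseteq X$, where $f$ and $Q$ are defined and $f'$ has no root.

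The core step is to fix $z\in D_{t,j}$ and count preimages. Substituting $x=p^su+a$, set $\tilde H(u)=P(p^su+a)-z\,Q(p^su+a)$, so that solving $f(x)=z$ is the same as solving $\tilde H(u)=0$, and $x\in B_{p^t/c}(a)$ corresponds to $u\in B_{p^{t+s}/c}(0)$. The first term $p^t<p^{-s}$ is exactly what forces $\tilde H\in\mathbb Z_p[u]$: after $x\mapsto p^sx+a$ absorbs the denominators measured by $s$, the leftover coefficient $z-b$ has norm $\le p^t<p^{-s}$, which clears the last power of $p^{-s}$. I would then extract the two quantities Hensel needs. One has $\tilde H(0)=H(a)=Q(a)(f(a)-z)$, so $|\tilde H(0)|\le p^t|Q(a)|$; and $\tilde H'(0)=p^sH'(a)$ with $H'(a)=\bigl(F(a)+Q'(a)H(a)\bigr)/Q(a)$, where $F=P'Q-PQ'$. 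Here $|F(a)|=c|Q(a)|^2$, and the third term $p^t<|Q(a)|c/|Q'(a)|$ is precisely the inequality $|Q'(a)H(a)|<|F(a)|$ that makes the $F(a)$ term dominate, yielding $|\tilde H'(0)|=p^{-s}c|Q(a)|$. With these computed, Hensel's inequality $|\tilde H(0)|<|\tilde H'(0)|^2$ reads $p^t|Q(a)|<p^{-2s}c^2|Q(a)|^2$, i.e. $p^t<p^{-2s}|Q(a)|c^2$ — exactly the fourth term. Hensel then produces a unique root $u_0$ with $|u_0|\le|\tilde H(0)|/|\tilde H'(0)|\le p^{t+s}/c$, unique even in the larger ball $|u|<|\tilde H'(0)|=p^{-s}c|Q(a)|$; the same fourth inequality, rewritten as $p^{t+2s}<c^2|Q(a)|$, guarantees $B_{p^{t+s}/c}(0)$ lies inside this uniqueness ball. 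Translating back, every $z\in D_{t,j}$ has exactly one preimage in $B_{p^t/c}(a)$.

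It remains to check image containment, $f(B_{p^t/c}(a))\subseteq D_{t,j}$, so that the bijection statement is genuine and no point of the ball escapes. For this I would write $f(p^su+a)-b=p^s\mathcal S(u)/\mathcal R(u)$ with $\mathcal S(u)=P(p^su+a)-bQ(p^su+a)$ and $\mathcal R(u)=p^sQ(p^su+a)$, both in $\mathbb Z_p[u]$ by the choice of $s$. The fourth term keeps $|\mathcal R(u)|$ pinned at its constant-term value $p^{-s}|Q(a)|$ across the ball, and a term-by-term estimate of $\mathcal S$ — constant coefficient of norm $\le p^t|Q(a)|$, linear coefficient of norm $p^{-s}c|Q(a)|$ (again by the third term), higher coefficients controlled by the fourth — gives $|\mathcal S(u)|\le p^t|Q(a)|$, hence $|f(x)-b|\le p^t$. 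Combining image containment with the unique-preimage count yields the asserted bijection: surjectivity and the fact that the whole ball maps into $D_{t,j}$ force injectivity on all of $B_{p^t/c}(a)$.

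The main obstacle is the norm bookkeeping in the core step, and specifically the interplay between the substitution level $s$ and the four inequalities. One must verify that $s$ clears exactly the right denominators so that $\tilde H$, $\mathcal S$, and $\mathcal R$ are integral, while simultaneously keeping $|\tilde H'(0)|$ equal to $p^{-s}c|Q(a)|$ rather than collapsing to something smaller — this is where the cross term $Q'(a)H(a)$ must be dominated by $F(a)$, i.e. precisely the third inequality. Arranging all four powers of $p$ so that both Hensel's quadratic inequality and its uniqueness radius reduce to the single fourth term, and so that the higher-order tail in the image estimate is absorbed by that same term, is the delicate part; everything else is routine $p$-adic estimation built on Proposition~\ref{P:local-scal}.
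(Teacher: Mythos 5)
Your proposal is correct, and its core --- the Hensel step --- is exactly the paper's argument: the same auxiliary polynomial $P(p^su+a)-zQ(p^su+a)$, with the first inequality giving integrality, the third forcing $|\tilde H'(0)|=p^{-s}|f'(a)||Q(a)|$ by dominating the cross term $Q'(a)(f(a)-z)$, and the fourth delivering $|\tilde H(0)|<|\tilde H'(0)|^2$ and the bound $|u_0|\le p^{t+s}/|f'(a)|$. Where you diverge is in how you obtain injectivity and the containment $f(B_{p^t/|f'(a)|}(a))\subset D_{t,j}$. The paper dispatches both in one line: since $p^t/|f'(a)|<p^l$ (your second inequality) and $f'$ has no root in $X$, Proposition~\ref{P:local-scal} makes $f$ locally scaling on that ball with scalar $|f'(a)|$, so it is injective and shrinks the ball to one of radius exactly $p^t$. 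You instead prove containment by a term-by-term coefficient estimate on $\mathcal S$ and $\mathcal R$ (which does go through, using $|f'(a)|\le 1$ so that the fourth inequality also controls the linear and higher terms), and you derive injectivity from the uniqueness clause of Hensel's lemma in the ball $|u|<|\tilde H'(0)|$. Note that this uniqueness radius is \emph{not} what Lemma~\ref{hensel} as stated in the paper provides --- it only gives uniqueness within $|u|\le |\tilde H(0)|/|\tilde H'(0)|$, which shrinks with $|z-f(a)|$ --- so you are implicitly invoking the standard strengthened form of Hensel/Newton. That is a true statement, but if you route injectivity through the local scaling property instead, you need only the lemma as stated and can delete both the containment computation and the uniqueness discussion.
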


Before the proof of the proposition we recall the following well known lemma.

\begin{lem}[Hensel's Lemma]\label{hensel}
Let $F(x)$ be a polynomial with coefficients in $\mathbb{Z}_p$. Let
$a\in \mathbb{Z}_p$ such that $|F(a)|<|F'(a)|^2$. Then there exists
a unique root $a'\in \mathbb{Z}_p$ of $F(x)$ such that $|a'-a|\leq
|F(a)|/|F'(a)|$.
\end{lem}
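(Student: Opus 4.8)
The plan is to prove existence by the $p$-adic Newton iteration, deduce convergence from the completeness of $\mathbb{Q}_p$, and then settle uniqueness by a direct ultrametric estimate. I set $a_0 = a$ and define recursively $a_{n+1} = a_n - F(a_n)/F'(a_n)$. First I would record the formal Taylor expansion $F(x+h) = \sum_{k \geq 0} c_k(x) h^k$ with $c_k(x) = \frac{1}{k!}F^{(k)}(x)$; the essential structural point, which I would verify, is that for $F \in \mathbb{Z}_p[x]$ each divided derivative $c_k$ again lies in $\mathbb{Z}_p[x]$, so that $|c_k(b)| \leq 1$ for every $b \in \mathbb{Z}_p$. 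This integrality is precisely what upgrades the naive Taylor bound into the sharp ultrametric estimates used below.

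Next I would prove, by induction on $n$, two invariants: (i) $|F'(a_n)| = |F'(a)|$, and (ii) $|F(a_n)| \leq c^{2^n}|F'(a)|^2$, where $c := |F(a)|/|F'(a)|^2 < 1$ by hypothesis. Writing $h_n = -F(a_n)/F'(a_n)$, the choice of $h_n$ cancels the degree-$\leq 1$ part of the Taylor expansion at $a_n$, so $F(a_{n+1}) = \sum_{k\geq 2} c_k(a_n)\, h_n^k$, whence $|F(a_{n+1})| \leq |h_n|^2 = |F(a_n)|^2/|F'(a_n)|^2$; combined with (i) this yields the quadratic decay in (ii). Likewise $|F'(a_{n+1}) - F'(a_n)| \leq |h_n| \leq c^{2^n}|F'(a)| < |F'(a)|$, so the ultrametric inequality forces $|F'(a_{n+1})| = |F'(a)|$, closing the induction; and each $a_n$ stays in $\mathbb{Z}_p$ since $|h_n| \leq 1$. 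From (ii) we get $|a_{n+1} - a_n| = |h_n| \leq c^{2^n}|F'(a)| \to 0$, so $(a_n)$ is Cauchy and converges to some $a' \in \mathbb{Z}_p$ (the ring being closed); continuity gives $F(a') = \lim_n F(a_n) = 0$, while the ultrametric bound $|a' - a| \leq \max_n |h_n| = |h_0| = |F(a)|/|F'(a)|$ delivers the asserted distance estimate.

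For uniqueness, suppose $a''$ is a second root with $|a'' - a| \leq |F(a)|/|F'(a)|$ and put $\delta = a'' - a'$. Expanding $0 = F(a'') = F(a' + \delta)$ about $a'$ and using $F(a') = 0$ gives $F'(a')\,\delta = -\sum_{k\geq 2} c_k(a')\,\delta^k$, so $|F'(a')|\cdot|\delta| \leq |\delta|^2$. If $\delta \neq 0$ this forces $|F'(a')| \leq |\delta| \leq |F(a)|/|F'(a)|$; but invariant (i) passes to the limit to give $|F'(a')| = |F'(a)|$, whence $|F'(a)|^2 \leq |F(a)|$, contradicting the hypothesis. Hence $a'' = a'$. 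The main obstacle I anticipate is not any isolated step but the bookkeeping linking the two invariants: one must verify that $|F'(a_n)|$ stays exactly equal to $|F'(a)|$ throughout, since the quadratic convergence of $F(a_n)$ depends on this denominator never degenerating. The integrality of the divided derivatives $c_k$ is the lever that makes both invariants propagate simultaneously.
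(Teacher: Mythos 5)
Your proof is correct and complete. One point of context: the paper itself gives no proof of this statement --- it is recalled as the classical, ``well known'' Hensel's Lemma and used as a black box in the proof of Proposition~\ref{bij} --- so there is no internal argument to compare against; what you have written is the standard Newton-iteration proof of exactly this sharp form of the lemma. The steps all check out: the divided derivatives $c_k=\frac{1}{k!}F^{(k)}$ of a polynomial in $\mathbb{Z}_p[x]$ again lie in $\mathbb{Z}_p[x]$ (binomial coefficients are integers), which is what yields $|F(a_{n+1})|\le |h_n|^2$ and $|F'(a_{n+1})-F'(a_n)|\le |h_n|$; with $c=|F(a)|/|F'(a)|^2<1$ both invariants propagate, giving $|h_n|\le c^{2^n}|F'(a)|\le |h_0|=|F(a)|/|F'(a)|$, so the ultrametric inequality places the limit $a'$ in the required ball; and in the uniqueness step the estimate $|F'(a')|\cdot|\delta|\le|\delta|^2$ (valid because $|\delta|\le 1$ for $\delta\in\mathbb{Z}_p$), combined with $|F'(a')|=|F'(a)|$ --- which indeed survives the passage to the limit by the same ultrametric argument --- contradicts the hypothesis $|F(a)|<|F'(a)|^2$ unless $\delta=0$. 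The only hypotheses you silently use, namely $F'(a)\ne 0$ (forced by $|F(a)|<|F'(a)|^2$) and $|F'(a)|\le 1$ (forced by integrality of the coefficients), are both available, so the argument is airtight.
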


\begin{proof}[Proof of Proposition~\ref{bij}]
Note that $p^t/|f'(a_{t,i})|<p^l$. So $f$ is locally scaling in the
ball $B_{p^t/|f'(a_{t,i})|}(a_{t,i})$ with local scalar
$|f'(a_{t,i})|$. Hence $f(B_{p^t/|f'(a_{t,i})|})\subset D_{t,j}$
and $f$ is injective.

On the other hand, we show that every point in $D_{t,j}$ has a
inverse image in $B_{p^t/|f'(a_{t,i})|}$. Let $b=f(a_{t,i})$ and
let $b'$ be any other point in $D_{t,j}$.
Let $F(x) = P(p^sx+a_{t,i})-b'Q(a_{t,i})$. Then 
\[F(x)=T_{a_{t,i},
a_{t,j}, s}(x, p^{-s}(b'-a_{t,j}))\in \mathbb{Z}_p[x].\] Clearly,
\[|F(0)|=|P(a_{t,i}-b'Q(a_{t,i})|=|b'-f(a_{t,i})|\cdot|Q_{t,i}|<p^s|Q(a_{t,i})|\leq
1.\] Note that
\begin{align*}
|F'(0)|&=|p^s(P'(a_{t,i})-b'Q'(a_{t,i}))|\\&=p^{-s}|Q(a_{t,i})f'(a_{t,i})+(b'-f(a_{t,i})Q'(a_{t,i}))|\end{align*}
and $|(b'-f(a_{t,i})Q'(a_{t,i})|\leq
p^t|Q'(a_{t,i})|<|Q(a_{t,i})f'(a_{t,i})|$. So
$|F'(0)|=p^{-s}|Q(a_{t,i})f'(a_{t,i})|$ and $|F(0)|\leq
p^t|Q(a_{t,i})|<|F'(0)|^2$. By Hensel's lemma, there exists a unique
root $x'\in \mathbb{Z}_p$ of $F(x)$ such that $|x'|\leq
|F(0)|/|F'(0)|\leq p^{t+s}/|f'(a_{t,i})|$. Thus $a' = p^sx'+a_{t,i}$
satisfies $f(a')=b'$ and $|a'-a_{t,i}|=p^{-s}|x'|\leq
p^t/|f'(a_{t,i})|$, as desired.
\end{proof}

\begin{cor}\label{cor-bij}
Let $t\leq t_0$, the intrinsic level. If $(A_{t,i}, A_{t,j})\in E(G(f,p^t))$, then $f:B_{p^t/|f'(a_{t,i})|}(a_{t,i})\rightarrow D_{t,j}$ is a
bijection.
\end{cor}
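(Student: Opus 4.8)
The plan is to reduce the corollary to Proposition~\ref{bij}. Recall that $G^{\ast}(f,p^t)$ is always a subgraph of $G(f,p^t)$ and that every vertex of $G(f,p^t)$ has outdegree exactly $1$; hence $G(f,p^t)=G^{\ast}(f,p^t)$ precisely when the defining inequality
\[
p^t<\min\left\{p^{-s},\, p^l|f'(a_{t,i})|,\, \frac{|Q(a_{t,i})|\,|f'(a_{t,i})|}{|Q'(a_{t,i})|},\, p^{-2s}|Q(a_{t,i})|\,|f'(a_{t,i})|^2\right\}
\]
holds for the unique outgoing edge of every vertex. So if I can show that $G(f,p^t)=G^{\ast}(f,p^t)$ for \emph{all} $t\le t_0$, then the hypothesis $(A_{t,i},A_{t,j})\in E(G(f,p^t))$ is the same as $(A_{t,i},A_{t,j})\in E(G^{\ast}(f,p^t))$, and Proposition~\ref{bij} immediately yields that $f\colon B_{p^t/|f'(a_{t,i})|}(a_{t,i})\to D_{t,j}$ is a bijection. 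Thus the entire content is to see that the set $\{t\le l:G(f,p^t)=G^{\ast}(f,p^t)\}$, whose maximum is $t_0$ by definition of the intrinsic level, is downward closed.

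For this I would re-use the cover-ball construction from the proof of Proposition~\ref{intrinsic-level} rather than argue level-by-level. Since $f'$ has no root in $X$ and $X$ is compact, that proof produces finitely many balls $B_{r(a_k)}(a_k)$ covering $X$ on each of which $|f'|$, $|Q|$, $|Q'|$ and the exponent $s$ are all constant (by Lemma~\ref{local-same-norm} together with the fact that $\mathbb Z_p$ is open and closed) and on each of which the displayed inequality is satisfied. The key ultrametric observation is that once $t$ is small enough that every level-$t$ ball $D_{t,i}$ lies inside one of these cover-balls, the same remains true at every finer level $t'\le t$, because a smaller ball meeting a cover-ball is contained in it; consequently the inequality holds at every representative for all such $t'$, independently of how the representatives are chosen. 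This exhibits a down-set $(-\infty,t_{\ast}]$ on which $G=G^{\ast}$, and I would then argue that $t_0$ lies in this stable range, so that $G(f,p^t)=G^{\ast}(f,p^t)$ for all $t\le t_0$ as required.

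I expect the main obstacle to be controlling the combinatorial exponent $s=s(a_{t,i})$ uniformly, rather than the norm conditions. The norms $|f'|$, $|Q|$, $|Q'|$ are handled cleanly by Lemma~\ref{local-same-norm}, but $s$ is the least integer making a polynomial identity integral, and its local constancy has to be extracted from the continuity of the coefficients of $T_{a,b,s}$ in $(a,b)$ and then made uniform over the compact set $X$; this is exactly the role of the radius $\rho_1$ isolated in the proof of Proposition~\ref{intrinsic-level}. Once the simultaneous constancy of the norms and of $s$ on each cover-ball is in hand, the corollary is a one-line consequence of Proposition~\ref{bij}.
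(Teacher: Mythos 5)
Your reduction hinges on the claim that $G(f,p^{t'})=G^{\ast}(f,p^{t'})$ for \emph{every} $t'\le t_0$, i.e.\ that the set of ``good'' levels is downward closed all the way up to its maximum $t_0$. The compactness argument you borrow from the proof of Proposition~\ref{intrinsic-level} does not give you this: it produces one level $t_{\ast}$ (the one coming from the finite subcover) such that the defining inequality holds at every representative for all $t'\le t_{\ast}$, hence a down-set $(-\infty,t_{\ast}]$ of good levels. But $t_0$ is by definition the \emph{largest} good level, so all you can conclude is $t_{\ast}\le t_0$, and nothing controls the levels in $(t_{\ast},t_0)$. Whether a given level is good depends on the values of $s$, $|Q|$, $|Q'|$ at the chosen representatives, and these need not be constant as you pass from a representative $a_{t,i}$ to the new representatives inside $D_{t,i}$ at finer levels; the inequality at level $t$ bounds $p^t$ by quantities evaluated at $a_{t,i}$ only, so it does not propagate to the new points. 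Consequently the good set need not be an interval, and your final step ``I would then argue that $t_0$ lies in this stable range'' is exactly the unproved (and possibly false) assertion $t_0=t_{\ast}$. As written, your argument proves the corollary only for $t\le t_{\ast}$, not for all $t\le t_0$.

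The paper sidesteps this by invoking $G=G^{\ast}$ only at the single level $t_0$, where it holds by definition. Given $(A_{t,i},A_{t,j})\in E(G(f,p^t))$ with $t\le t_0$, pass to the level-$t_0$ ball containing $D_{t,i}$ and its representative $a_{t_0,i_0}$; Proposition~\ref{bij} gives a bijection $f:B_{p^{t_0}/|f'(a_{t_0,i_0})|}(a_{t_0,i_0})\rightarrow D_{t_0,j_0}$. Since $|f'|$ is constant on $p^l$-balls, the source ball is a disjoint union of $p^{t_0-t}$ balls of radius $p^t/|f'(a_{t,i})|$, one of which is $B_{p^t/|f'(a_{t,i})|}(a_{t,i})$, while $D_{t_0,j_0}$ is a disjoint union of $p^{t_0-t}$ balls of radius $p^t$, one of which is $D_{t,j}$; each source sub-ball maps into a single target sub-ball, so the global bijection forces a one-to-one correspondence between the two families and in particular a bijection $B_{p^t/|f'(a_{t,i})|}(a_{t,i})\rightarrow D_{t,j}$. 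To salvage your route you would either have to prove downward closedness directly --- which requires controlling $s$, $|Q|$, $|Q'|$ uniformly over each $D_{t_0,i}$, essentially redoing this descent --- or replace $t_0$ by $t_{\ast}$ throughout, which yields a strictly weaker statement than the one asserted.
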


\begin{proof}
Let $\{a_{t_0, i_0}\}=B_{p^{t_0}}(a_{t,i})\cap S_{t_0}$ and let $D_{t_0, j_0}$ be the ball of radius $p^{t_0}$ containing $D_{t,j}$. Then $(A_{t_0, i_0}, A_{t_0, j_0})\in E(G(f, p^{t_0}))= E(G^{\ast}(f, p^{t_0}))$. By Proposition ~\ref{bij}, $f:B_{p^{t_0}/|f'(a_{t_0, i_0})|}(a_{t_0, i_0})\rightarrow D_{t_0, j_0}$ is a bijection. Recall that $|f'(a)|=|f'(a_{t_0, i_0})|$ whenever $|a-a_{t_0, i_0}|\leq r$. In particular, $|f'(a_{t,i})|=|f'(a_{t_0, i_0})|$. Note that $B_{p^{t_0}/|f'(a_{t_0, i_0})|}(a_{t_0, i_0})$ is the union of $p^{t_0-t}$ disjoint balls of radius $p^t/|f'(a_{t_0, i_0})|$. Write \[B_{p^{t_0}/|f'(a_{t_0, i_0})|}(a_{t_0, i_0})=\bigcup_{s=1}^{p^{t_0-t}}B_s \] where $B_1=B_{p^{t}/|f'(a_{t,i})|}(a_{t,i})$. Similarly, we can write \[D_{t_0, j_0}=\bigcup_{s=1}^{p^{t_0-t}}B'_s\] where every $B'_s$ is a ball of radius $p^t$ and $B'_1=D_{t,j}$. Each $B_s$ maps into one of $\{B'_s\}$. So there is a one-to-one correspondence between the balls $\{B_s\}$ and $\{B'_s\}$. In particular, $f:B_1\rightarrow B'_1$ is a bijection.
\end{proof}

Now we explore  how subsidiary digraphs  help us to study the structure
of rational functions. First, we consider the case when $f$ is a local isometry on $X$. In this case, Corollary ~\ref{cor-bij} gives:

\begin{cor}\label{bij-isom}
Suppose that  $f$ is a local isometry on $X$. Let $t\leq t_0$, the intrinsic level. If $(A_{t,i}, A_{t,j})\in
E(G(f,p^t))$, then $f:D_{t,i}\rightarrow D_{t,j}$ is a
bijection.
\end{cor}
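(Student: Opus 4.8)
The plan is to derive Corollary~\ref{bij-isom} directly from Corollary~\ref{cor-bij} by specializing to the isometric case. Under the hypothesis that $f$ is a local isometry on $X$, Proposition~\ref{criteria} gives $|f'(a)|=1$ for all $a\in X$. In particular $|f'(a_{t,i})|=1$, so the source ball in Corollary~\ref{cor-bij}, namely $B_{p^t/|f'(a_{t,i})|}(a_{t,i})$, collapses to $B_{p^t}(a_{t,i})$.

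\begin{proof}[Proof of Corollary~\ref{bij-isom}]
Since $f$ is a local isometry on $X$, Proposition~\ref{criteria} gives $|f'(a)|=1$ for all $a\in X$; in particular $f'$ has no root in $X$, so the hypotheses of Corollary~\ref{cor-bij} are satisfied and the intrinsic level $t_0$ is defined. Let $t\leq t_0$ and suppose $(A_{t,i}, A_{t,j})\in E(G(f,p^t))$. By Corollary~\ref{cor-bij}, the map $f:B_{p^t/|f'(a_{t,i})|}(a_{t,i})\rightarrow D_{t,j}$ is a bijection. Because $|f'(a_{t,i})|=1$, we have $p^t/|f'(a_{t,i})|=p^t$, so the domain ball is exactly $B_{p^t}(a_{t,i})=D_{t,i}$, the radius-$p^t$ ball containing the representative $a_{t,i}$. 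Hence $f:D_{t,i}\rightarrow D_{t,j}$ is a bijection, as claimed.
\end{proof}

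I expect this corollary to be essentially immediate once Corollary~\ref{cor-bij} is in hand, so there is no substantial obstacle: the only point requiring care is the identification of $B_{p^t/|f'(a_{t,i})|}(a_{t,i})$ with the digraph vertex ball $D_{t,i}$. This identification is clean precisely because $a_{t,i}\in D_{t,i}$ by construction and $D_{t,i}$ is the unique closed ball of radius $p^t$ containing $a_{t,i}$, so that $B_{p^t}(a_{t,i})=D_{t,i}$. If one wanted to avoid invoking Proposition~\ref{criteria}, one could alternatively take ``$f$ is a local isometry'' to mean directly that the local scalar $C(a)=|f'(a)|$ equals $1$ everywhere, which yields the same simplification of the domain radius.
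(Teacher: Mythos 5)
Your proof is correct and matches the paper's intended argument exactly: the paper presents Corollary~\ref{bij-isom} as the immediate specialization of Corollary~\ref{cor-bij} to the isometric case, where $|f'(a_{t,i})|=1$ collapses the domain ball $B_{p^t/|f'(a_{t,i})|}(a_{t,i})$ to $D_{t,i}$. Your added remark that $|f'(a)|=1$ guarantees $f'$ has no root in $X$ (so that $t_0$ is defined and Corollary~\ref{cor-bij} applies) is a worthwhile detail the paper leaves implicit.
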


By the definition of $G$, every vertices has outdegree 1. So there
exists at least one cycle in $G$. Given any subgraph $K$ of
a graph $G(f, p^t)$, let $\widetilde{K}$ denote the subset of $\mathbb Q_p$ consisting of the
union of balls corresponding to the vertices of $K$.\\

\begin{prop}~\label{m-p-comp1}
Suppose that  $f$ is a local isometry on $X$. Let $t\leq t_0$ and let $Y=\widetilde{K}$ for some $K$ that is
a union of any collection of cycles in $G(f, p^t)$. Then $f|_Y:
Y\rightarrow Y$ is a well defined invertible local isometry; and,
\textit{a priori}, $f|_Y$ is measure-preserving.
\end{prop}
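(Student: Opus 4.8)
The plan is to verify the four asserted properties in the order well-definedness, local isometry, invertibility, and measure-preservation, with essentially all the work concentrated in invertibility. First I would note that $Y=\widetilde{K}$ is a finite union of balls of radius $p^t$, each clopen in $\mathbb{Q}_p$, so $Y$ is a compact open subset of $X$. Since $K$ is a union of cycles, every vertex $A_{t,i}\in K$ has its unique out-neighbor $A_{t,j}$ again in $K$; because $(A_{t,i},A_{t,j})\in E(G(f,p^t))$ means $f(D_{t,i})\subseteq D_{t,j}\subseteq Y$, this shows $f(Y)\subseteq Y$ and hence $f|_Y\colon Y\to Y$ is well defined. The local isometry property is then inherited: $f$ is a local isometry on $X$ and each ball $D_{t,i}$ has radius $p^t\leq p^{t_0}\leq p^l$, so $f$ is an honest isometry on each $D_{t,i}$; restricting to $Y$ changes nothing, so $f|_Y$ is a local isometry, and in particular locally $1$-Lipschitz.

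The heart of the argument is invertibility, and here I would lean on Corollary~\ref{bij-isom}. Since $t\leq t_0$ and $f$ is a local isometry, that corollary tells us that for each edge $(A_{t,i},A_{t,j})\in E(K)$ the restriction $f\colon D_{t,i}\to D_{t,j}$ is a bijection. It then remains to combine these per-ball bijections with the combinatorial structure of $K$. Because $K$ is a disjoint union of cycles, every vertex has indegree and outdegree exactly $1$, so the assignment sending each vertex to its out-neighbor is a permutation of the vertex set of $K$, and correspondingly $f$ permutes the balls $\{D_{t,i}\}$ composing $Y$. Injectivity of $f|_Y$ follows since $f(x)=f(y)$ forces $x,y$ into a common image ball $D_{t,j}$, whose indegree $1$ makes their source balls coincide, after which per-ball injectivity gives $x=y$; surjectivity follows since any $z\in D_{t,j}\subseteq Y$ has a unique source ball $D_{t,i}$ (again indegree $1$) with a preimage in it by per-ball surjectivity. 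Thus $f|_Y$ is a bijection of $Y$.

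Finally, $f|_Y$ is a locally $1$-Lipschitz rational function on the compact open set $Y$ that has just been shown to be invertible, so the implication (2)$\Rightarrow$(1) of Theorem~\ref{m-p}, applied to $Y$ in place of $X$, delivers measure-preservation at once; this is the sense in which it holds \emph{a priori}. I expect the main obstacle to be the invertibility step, specifically the bookkeeping needed to upgrade Corollary~\ref{bij-isom}'s ball-by-ball bijections into a global bijection of $Y$ via the indegree-one/outdegree-one structure of the cycles. Everything else is either inherited directly from $f$ on $X$ or a single invocation of Theorem~\ref{m-p}, so no new estimates should be required.
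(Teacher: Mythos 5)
Your proposal is correct and follows essentially the same route as the paper: both rest on Corollary~\ref{bij-isom} to get the ball-to-ball bijections and then use the indegree-one/outdegree-one structure of the cycles to assemble a global bijection, finishing with Theorem~\ref{m-p} for measure-preservation. You simply spell out the injectivity/surjectivity bookkeeping that the paper compresses into the remark that distinct cycles are disconnected.
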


\begin{proof}
By Corollary~\ref{bij-isom},
$f|_{\mathcal{\widetilde{C}}}:\mathcal{\widetilde{C}}\rightarrow\mathcal{\widetilde{C}}$ is an
invertible local isometry on each cycle $\mathcal{C}$ in
$G(f,p^t)$. Since each vertex in $G$ has outdegree at
most 1, any two cycles in the digraph must be disconnected. Hence,
$f|_Y:Y\rightarrow Y$ is an invertible local isometry. By
Theorem~\ref{m-p}, $f|_Y$ is measure-preserving.
\end{proof}

The proposition above gives us a systematic way to find measure-preserving components of $f$ in $X$ when $f$ is locally isometric. This proposition can be extended to the general case.\\

\begin{prop}~\label{m-p-comp2}
Suppose that  $f'(x)$ has no roots in $X$. Let $t\leq t_0$ and let $Y=\widetilde{K}$ for some $K$ that is
a union of any collection of cycles in $G(f, p^t)$. Then $f|_Y:Y\rightarrow Y$ is measure-preserving if and only if $Y=\widetilde{K'}$ for some $K'$ that is
a union of any collection of cycles in $G(f, p^{t-1})$.
\end{prop}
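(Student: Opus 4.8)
The plan is to collapse the measure-preserving condition to a single geometric condition, namely that $f$ is a local isometry on $Y$ ($|f'|\equiv 1$ on $Y$), and then to identify this condition with the level-$(t-1)$ cycle condition. First I would note that $f|_Y$ is \emph{automatically} injective: since $f'$ has no root in $X$, $f$ is locally scaling with positive scalar $|f'(a_{t,i})|$ on each ball $D_{t,i}\subset Y$ (radius $p^t\le p^l$), hence injective on $D_{t,i}$; and because $Y=\widetilde{K}$ is a union of cycles in $G(f,p^t)$, every vertex has indegree $1$, so distinct balls of $Y$ have images lying in distinct balls and thus disjoint. Applying Theorem~\ref{m-p} to the locally $1$-Lipschitz rational function $f|_Y\colon Y\to Y$, measure-preservation is equivalent to invertibility, hence (given injectivity) to surjectivity.

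Next I would show surjectivity is equivalent to $|f'|\equiv 1$ on $Y$. Since $t\le t_0$, Corollary~\ref{cor-bij} applies to each edge $(A_{t,i},A_{t,j})$ of $G(f,p^t)$ inside $Y$: as $f$ scales $D_{t,i}$ by $|f'(a_{t,i})|$, the image $f(D_{t,i})$ is a ball of measure $|f'(a_{t,i})|\,p^t$, and these images are disjoint by injectivity. Thus $\mu(f(Y))=\sum_i |f'(a_{t,i})|\,p^t\le \sum_i p^t=\mu(Y)$, with equality if and only if $|f'(a_{t,i})|=1$ for every ball of $Y$; and when all these scalars equal $1$, Corollary~\ref{cor-bij} gives $f(D_{t,i})=D_{t,j}$, so the images tile $Y$ and $f(Y)=Y$ exactly. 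Hence $f|_Y$ is measure-preserving if and only if $|f'|\equiv 1$ on $Y$.

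It remains to prove $|f'|\equiv 1$ on $Y$ if and only if $Y$ is a union of cycles in $G(f,p^{t-1})$. For the forward direction, assume $|f'|\equiv 1$ on $Y$; since $t-1\le t_0$ as well, Corollary~\ref{cor-bij} at level $t-1$ (with scalar $1$) shows $f$ carries each ball of radius $p^{t-1}$ in $Y$ bijectively onto a ball of radius $p^{t-1}$ in $Y$. The induced map on the finite set of $p^{t-1}$-balls of $Y$ is injective—a repeated full image would contradict the injectivity of $f|_Y$—hence a permutation, whose cycle decomposition exhibits $Y$ as a union of cycles in $G(f,p^{t-1})$.

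The main step is the converse, and it is where the level $t-1$ is forced. Suppose $Y$ is a union of cycles in $G(f,p^{t-1})$ but some $D_{t,i}\subset Y$ has $|f'(a_{t,i})|=p^{-v}$ with $v\ge 1$. Since $f$ scales $D_{t,i}$ by $p^{-v}$, the set $f(D_{t,i})$ has diameter $p^{t-v}\le p^{t-1}$ and therefore lies inside a single ball $D_{t-1,j'}$ of radius $p^{t-1}$, with $D_{t-1,j'}\subset Y$ because $f(Y)\subset Y$. But $D_{t,i}$ is the disjoint union of its $p$ sub-balls of radius $p^{t-1}$, and all $p\ge 2$ of them map into the same $D_{t-1,j'}$, forcing $\mathrm{indegree}(A_{t-1,j'})\ge p\ge 2$ in $G(f,p^{t-1})$—contradicting the cycle condition. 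Hence $|f'|\equiv 1$ on $Y$, and combining the two equivalences gives the proposition. The only delicate point is radius bookkeeping: one must verify $p^t/|f'(a_{t,i})|<p^l$ so that the scaling identity and Corollary~\ref{cor-bij} apply, and that $v\ge 1$ yields exactly the diameter bound $p^{t-v}\le p^{t-1}$ that drives the collision.
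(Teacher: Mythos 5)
Your proof is correct and follows essentially the same route as the paper's: the decisive step --- showing that a local scalar $|f'(a)|<1$ forces $p$ distinct balls of radius $p^{t-1}$ to map into a single such ball, producing a vertex of indegree $\geq p$ and destroying the cycle structure of $G(f,p^{t-1})$ --- is exactly the paper's argument via Corollary~\ref{cor-bij}. The only difference is organizational: where the paper reduces measure-preservation to the local-isometry condition by citing Proposition~\ref{m-p-comp1} and Theorem~\ref{m-p}, you establish that equivalence directly through injectivity plus the tiling count $\mu(f(Y))=\sum_i |f'(a_{t,i})|\,p^t$.
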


\begin{proof}
First, if $f|_Y:Y\rightarrow Y$ is measure-preserving, then by Theorem ~\ref{m-p}, $G(f|_Y, p^s)$ must be a disjoint union of cycles for all $s\leq l$. In particular, $G(f|_Y, p^{t-1})$ is a union of cycles.

On the other hand, if $G(f|_Y, p^{t-1})$ is a union of cycles, we show that $f|_Y$ is measure-preserving. By Proposition ~\ref{m-p-comp1}, we only need to show that $f|_Y$ is a local isometry; or equivalently, $|f'(a_{t-1,i})|=1$ for all those $a_{t-1,i}$ in $Y$.

Suppose $|f'(a_{t-1,i_0})|<1$ for some $i_0$. Let $D_{t-1,j_0}$ denote the ball $B_{p^{t-1}}(f(a_{t-1,i_0}))$. By Corollary ~\ref{cor-bij}, $f:B_{p^{t-1}/|f'(a_{t-1,i_0})|}(a_{t-1,i_0})\rightarrow D_{t-1, j_0}$ is a bijection. Note that $B_{p^t}(a_{t-1, i_0})\subseteq B_{p^{t-1}/|f'(a_{t-1, i_0})|}(a_{t-1, i_0})\cap Y$. Let $S_{t-1}\cap B_{p^t}(a_{t-1, i_0})=\{a_{t-1, i_0}, a_{t-1, i_1},..., a_{t-1, i_{p-1}}\}$. Then $(A_{t-1, i_s}, A_{t-1, j_0})\in E(G^{\ast}(f, p^{t-1}))$, $s=0,1,...,p-1$. Hence $d^{-}(A_{t-1, j_0})\geq p>1$. So $Y$ cannot be a disjoint union of cycles in $G(f, p^{t-1})$, a contradiction. Therefore, $f|_Y$ is a local isometry, as desired.
\end{proof}

\begin{exmp}
Let $X$ and $f$ be the same as in example ~\ref{examp}. It is not hard to see that $|f'(x)|=|(x^2+1)/x^2|=1$ for all $x\in X$. By Proposition ~\ref{criteria}, $f$ is a local isometry. We can take $r=7^{-1}$ and $l=-1$. Take $t=-2$ and select representatives
\[S_{-2}=\{2,9,16,23,30,37,44,5,12,19,26,33,40,47\}.\] Then the subsidiary digraph $G^{\ast}(f, 7^{-2})$ coincides with the digraph $G(f, 7^{-2})$. By Proposition ~\ref{m-p-comp1}, the union of any collection of cycles in $G(f, 7^{-2})$ corresponds to a measure-preserving component. For example, take 
\[Y=B_{7^{-2}}(2)\cup B_{7^{-2}}(9)\cup B_{7^{-2}}(23)\cup B_{7^{-2}}(26)\cup B_{7^{-2}}(40)\cup B_{7^{-2}}(47).\] Then $Y$ corresponds to a cycle of length 6. So $f|_Y:Y\rightarrow Y$ is measure-preserving. It is also easy to see that $f$ is measure-preserving on $X$ since $G(f, 7^{-2})$  is itself a union of cycles.

\end{exmp}

\begin{exmp}
Let \[X = \mathbb{Z}_3-(B_{3^{-2}}(4)\cup B_{3^{-2}}(5))\text{ and }f(x) = \frac{2x^3+x^2+x}{x^2+1}.\] Then $f'(x) = \frac{2x^4+5x^2+2x+1}{(x^2+1)^2}$. It is not hard to check that $|f'(x)|\leq 1$ on $X$ and $f'$ has no zero on $X$. So $f$ is 1-Lipschitz. We can take $r=3^{-2}$ and  $l=-2$. Take $S_{-2}=\{0,1,2,3,6,7,8\}$ and \[S_{-3}=\{0,1,2,3,6,7,8,9,10,11,12,15,16,17,18,19,20,21,24,25,26\}.\] The corresponding digraphs and subsidiary digraphs at $t=-2, -3$ are as shown in Figure~\ref{F:three}.
\begin{figure}[h]

\begin{center}
    \psfrag{A}[]{$Y_1$}
    \psfrag{B}[]{$Y_2$}
    \psfrag{C}[]{$G^{\ast}(f,3^{-2})=G(f,3^{-2})$}
    \psfrag{D}[]{$G(f,3^{-3})$}
 \includegraphics[width=12cm]{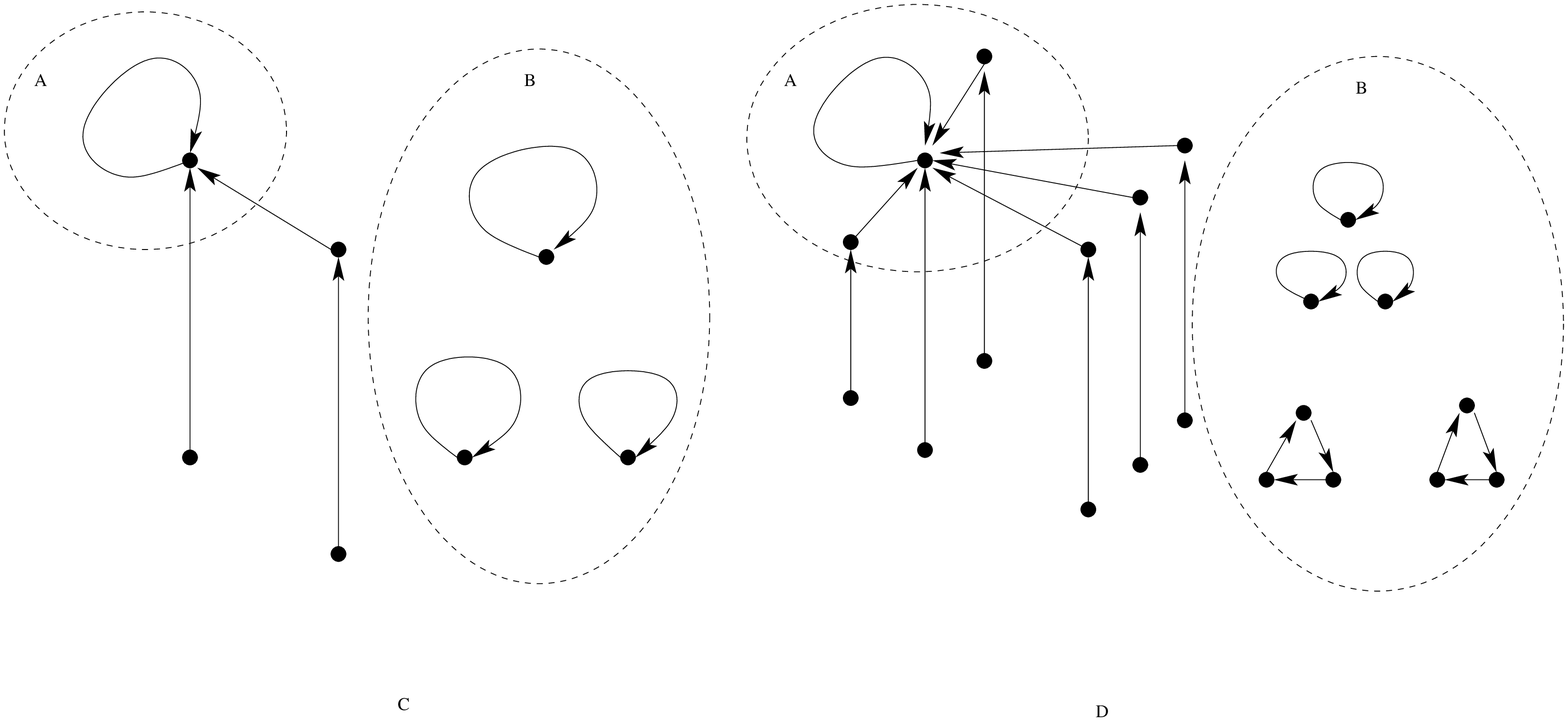}\\

\end{center}
\caption { }
 \label{F:three}
\end{figure}
We see that $G^{\ast}=G$ at $t=-2$. So $t_0=-2$. Let $Y_1$ and $Y_2$ be the subset of $X$ associated to the corresponding subgraphs as above; i.e., $Y_1=B_{3^{-2}}(8)$ and $Y_2=B_{3^{-1}}(0)$. In $G(f,3^{-2})$, both subgraphs consist of union of cycles. However, in $G(f,3^{-3})$, $Y_1$ does not correspond to union of cycles. By Proposition ~\ref{m-p-comp2}, $f|_{Y_2}$ is measure-preserving but $f|_{Y_1}$ is not.

\end{exmp}

\noindent\textbf{Remark.}  Propositions ~\ref{m-p-comp1} and ~\ref{m-p-comp2} provide a systematic way to find subset of $X$ on which $f$ is measure-preserving. In addition, the algorithm can be implemented by a computer. All computations are direct given initial data ($X,f,r,A_{t,i}, S_t$). So we only need to show that $r$ can be determined in a reasonable time. This is discussed in the Appendix.


\section{A Characterization of Invertible Locally Isometric Rational Functions over $\mathbb{Q}_p$}\label{S:qp}

Let $f$ be a rational function over $\mathbb{Q}_p$. Write $f(x)=p^{\alpha}\frac{P_1(x)}{Q_1(x)}$ where $P_1(x) = a_mx^m+a_{m-1}x^{m-1}+\cdots+a_0$ and $Q_1(x)=b_nx^n+b_{n-1}x^{n-1}+\cdots+b_0$ with $a_m, b_n\in \mathbb{Z}_p^{\times}$. Assume $Q_1$ has no root on $\mathbb{Q}_p$. Write $f'(x) = p^{\alpha'}\frac{P_2(x)}{Q_2(x)}$ where $P_2(x)=a'_{m'}x^{m'}+\cdots +a'_0$ and $Q_2(x)=b'_{n'}x^{n'}+\cdots +b'_0$ with $a'_{m'}, b'_{n'}\in \mathbb{Z}_p^{\times}$.

\begin{prop}\label{inv-iso-qp}
If $f(x)$ is an invertible local isometry on $\mathbb{Q}_p$, we must have $\alpha=0$ and $m=n+1$.
\end{prop}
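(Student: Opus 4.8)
The plan is to extract everything from the single identity $|f'(x)|=1$, evaluated for $|x|$ large. First I would note that because $f$ is a local isometry on every ball of $\mathbb{Q}_p$, Proposition~\ref{P:local-scal} forces $|f'(a)|=1$ at every point $a$: when $f'(a)\neq 0$ the map scales distances by $|f'(a)|$ near $a$, so being an isometry gives $|f'(a)|=1$, and $f'(a)=0$ is impossible since part~(2) would make $f$ strictly contract near $a$. Hence $f'$ has no zeros and $|f'|\equiv 1$ on $\mathbb{Q}_p$. I will also use that $f$, being invertible, is surjective.

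The heart of the argument is the asymptotic behaviour at infinity, where the hypotheses $a_m,b_n\in\mathbb{Z}_p^{\times}$ pay off. Since the leading coefficients are units, the ultrametric inequality gives $|P_1(x)|=|x|^m$ and $|Q_1(x)|=|x|^n$ for all sufficiently large $|x|$, so
\[
|f(x)| = p^{-\alpha}\,|x|^{\,m-n}\qquad(|x|\gg 0).
\]
Differentiating the representation,
\[
f'(x)=p^{\alpha}\,\frac{P_1'(x)Q_1(x)-P_1(x)Q_1'(x)}{Q_1(x)^2},
\]
the coefficient of $x^{m+n-1}$ in the numerator $P_1'Q_1-P_1Q_1'$ is $m a_mb_n-n a_mb_n=(m-n)a_mb_n$, while the denominator is $b_n^2x^{2n}+\cdots$ with $|b_n^2|=1$. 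Provided $m\neq n$ this coefficient is a nonzero top term, and reading off valuations yields
\[
|f'(x)| = p^{-\alpha}\,|m-n|_p\,|x|^{\,m-n-1}\qquad(|x|\gg 0).
\]

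Now I would run the case analysis. If $m\le n$ the first display shows $|f(x)|$ stays bounded as $|x|\to\infty$, and since $f$ has no poles it is bounded on the complementary compact ball as well; thus $f(\mathbb{Q}_p)$ is bounded, contradicting surjectivity. This disposes of $m\le n$ cleanly, including the degenerate equality $m=n$. For $m\ge n+1$ the second display applies: if $m\ge n+2$ the exponent $m-n-1$ is positive and $|f'(x)|\to\infty$, contradicting $|f'|\equiv 1$; so $m=n+1$. In that case $|m-n|_p=|1|_p=1$ and the exponent is $0$, so the formula reduces to $|f'(x)|=p^{-\alpha}$ for large $|x|$, and comparison with $|f'|\equiv 1$ forces $\alpha=0$.

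The only real care is the bookkeeping at infinity: one must check that the leading monomial genuinely dominates in the ultrametric, so that $|f(x)|$ and $|f'(x)|$ equal (not merely are bounded by) the stated powers of $|x|$, and one must notice that at $m=n$ the numerator of $f'$ loses its top coefficient, which is why I route that borderline case through surjectivity rather than through the derivative formula. Everything else is just reading off $p$-adic valuations of leading coefficients, which are $1$ by the unit hypotheses $a_m,b_n\in\mathbb{Z}_p^{\times}$.
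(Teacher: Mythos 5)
Your proof is correct, and its core --- reading off $|f'(x)|$ for $|x|$ large from the unit leading coefficients and from the leading term $(m-n)a_mb_nx^{m+n-1}$ of $P_1'Q_1-P_1Q_1'$ --- is exactly the computation the paper performs. The one genuine divergence is the case $m\le n$: since the coefficient $(m-n)a_mb_n$ degenerates at $m=n$, you route that case through boundedness of the image of $f$ plus surjectivity, whereas the paper stays entirely with the derivative, using only the degree inequality $\deg\parb{P_1'Q_1-P_1Q_1'}\le m+n-1<2n=\deg Q_1^2$ (so the reduced numerator of $f'$ has strictly smaller degree than the reduced denominator, contradicting $|f'(x)|=1$ at infinity); this sidesteps the vanishing leading term without needing it exactly. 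The paper's variant has the small advantage of using only the local-isometry hypothesis $|f'|\equiv 1$ and never invoking invertibility, while yours is equally valid under the stated hypotheses and is arguably more transparent about why $m=n$ must be treated separately.
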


\begin{prop}\label{m-p-qp}
If $f(x)$ is measure-preserving on $\mathbb{Q}_p$, we must have $\alpha=0$ and $m=n+1$.
\end{prop}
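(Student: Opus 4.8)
The plan is to test the measure-preserving hypothesis against the single family of balls $B_{p^N}(0)$ as $N\to\infty$, reading off the behavior of $f$ at infinity from the measures of their preimages. The starting point is that, since $a_m,b_n\in\mathbb{Z}_p^{\times}$ and $P_1,Q_1\in\mathbb{Z}_p[x]$, the leading terms dominate in the ultrametric once $|x|$ is large: there is an integer $K_0$ such that for every $x$ with $|x|=p^k$ and $k\ge K_0$ one has $|P_1(x)|=p^{km}$ and $|Q_1(x)|=p^{kn}$, whence
\[
|f(x)| = p^{-\alpha}\frac{|P_1(x)|}{|Q_1(x)|} = p^{\,k(m-n)-\alpha}.
\]
In particular $|f(x)|$ depends only on $|x|$ on this outer region, so each sphere $S_{p^k}(0)$ with $k\ge K_0$ lies entirely inside or outside any preimage $f^{-1}(B_{p^N}(0))$.

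First I would dispose of the case $m\le n$. Then $k(m-n)-\alpha$ is non-increasing in $k$, so $|f|$ is bounded on $\{|x|\ge p^{K_0}\}$; as $f$ is continuous (because $Q_1$ has no root in $\mathbb{Q}_p$) it is bounded on the compact ball $B_{p^{K_0-1}}(0)$ as well, hence globally bounded by some $M$. Choosing $N$ with $p^N>M$ gives $f^{-1}(B_{p^N}(0))=\mathbb{Q}_p$, of infinite measure, contradicting $\mu(f^{-1}(B_{p^N}(0)))=\mu(B_{p^N}(0))=p^N$. Thus $m>n$.

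Assuming $m>n$, I would identify the preimage exactly for large $N$. On the outer region the condition $|f(x)|\le p^N$ reads $k(m-n)-\alpha\le N$, i.e.\ $k\le (N+\alpha)/(m-n)$; writing $k_{\max}=\lfloor (N+\alpha)/(m-n)\rfloor$ and taking $N$ large enough that $k_{\max}\ge K_0$ and that the bounded compact core $B_{p^{K_0-1}}(0)$ is entirely contained in the preimage, one checks that $f^{-1}(B_{p^N}(0))$ is exactly the ball $B_{p^{k_{\max}}}(0)$ (nothing with $k>k_{\max}$ survives, since there $|f(x)|>p^N$). Measure-preservation then yields
\[
p^N = \mu\bigl(B_{p^N}(0)\bigr) = \mu\bigl(f^{-1}(B_{p^N}(0))\bigr) = p^{\lfloor (N+\alpha)/(m-n)\rfloor}
\]
for all large $N$, so $\lfloor (N+\alpha)/(m-n)\rfloor = N$ for all large $N$. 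If $m-n\ge 2$, the left side is at most $(N+\alpha)/2<N$ for large $N$, a contradiction; hence $m-n=1$, and then the identity reads $N+\alpha=N$, forcing $\alpha=0$. This gives $m=n+1$ and $\alpha=0$, as claimed, and in fact parallels the conclusion of Proposition~\ref{inv-iso-qp}.

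The main obstacle I anticipate is making the measure computation \emph{exact} rather than merely asymptotic: one must verify that leading-term domination holds with equality (not just up to bounded error) for all $|x|\ge p^{K_0}$, so that each outer sphere is cleanly in or out of the preimage, and that the compact core contributes no extra measure beyond filling out the ball $B_{p^{k_{\max}}}(0)$. Once the preimage is pinned down as an honest ball of radius $p^{k_{\max}}$, matching exponents against $p^N$ is immediate.
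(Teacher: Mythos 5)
Your proof is correct and takes essentially the same approach as the paper's: both read off $|f(x)|=p^{-\alpha}|x|^{m-n}$ from leading-term domination for large $|x|$, bound $f$ on the remaining compact core, and derive a contradiction by comparing $\mu\bigl(f^{-1}(B_{p^N}(0))\bigr)$ with $p^N$. Your version merely organizes the cases differently (computing the preimage exactly as the ball $B_{p^{k_{\max}}}(0)$ rather than exhibiting one strict inclusion per case), which is a cosmetic difference.
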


To prove these results, we need the following two lemmas.

\begin{lem}\label{N}
For any polynomial $F(x)=c_sx^s+\cdots +c_0\in \mathbb{Q}_p[x]$ with $c_s\in \mathbb{Z}_p$, there exists a positive integer $N$ such that $|F(x)|=|x|^s$ whenever $|x|\geq p^N$.
\end{lem}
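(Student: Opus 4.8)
The plan is to prove this by leading-term dominance, exploiting the non-archimedean (ultrametric) nature of $|\cdot|$ on $\mathbb{Q}_p$: in a finite sum, if one summand has strictly larger absolute value than all the others, then the absolute value of the sum equals that of the dominant summand. Concretely, I will show that for $|x|$ large the leading term $c_s x^s$ strictly dominates the remaining terms, so that $|F(x)| = |c_s x^s| = |c_s|\,|x|^s$, and then read off $|F(x)| = |x|^s$ from $|c_s| = 1$.

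First I would split $F(x) = c_s x^s + R(x)$, where $R(x) = c_{s-1}x^{s-1}+\cdots+c_0$ has degree at most $s-1$, and set $M = \max_{0\le i\le s-1}|c_i|$ (interpreting $M=0$ when $R\equiv 0$). For $|x|\ge 1$ the strong triangle inequality gives $|R(x)| \le \max_{0\le i\le s-1}\bigl(|c_i|\,|x|^i\bigr) \le M\,|x|^{s-1}$, using $|x|^i\le|x|^{s-1}$ for $0\le i\le s-1$. Since $|c_s|=1$, the leading term satisfies $|c_s x^s| = |x|^s$. Choosing a positive integer $N$ with $p^N>M$, any $x$ with $|x|\ge p^N$ has $|x|>M$, so $|x|^s = |x|\cdot|x|^{s-1} > M\,|x|^{s-1}\ge|R(x)|$; the leading term then strictly dominates and therefore $|F(x)| = |c_s x^s| = |x|^s$, as required. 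The case $s=0$ is immediate ($F\equiv c_0=c_s$, so $|F(x)|=1=|x|^0$), and $R\equiv 0$ is trivial.

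The dominance estimate is routine; the single point that genuinely must be pinned down — and really the only content of the hypothesis — is that $|c_s| = 1$. I read the assumption $c_s\in\mathbb{Z}_p$ together with $c_s$ being the nonzero leading coefficient as asserting that $c_s$ is a unit, which is consistent with the normalization $a_m,b_n\in\mathbb{Z}_p^{\times}$ fixed for $P_1,Q_1$ immediately before the lemma; without $|c_s|=1$ one obtains only $|F(x)|=|c_s|\,|x|^s$. Thus the step I would state explicitly is $|c_s|=1$, after which the choice of $N$ and the ultrametric comparison finish the argument.
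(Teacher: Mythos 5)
Your proof is correct and is essentially the paper's own argument: choose $N$ with $p^N$ exceeding the absolute values of the lower-order coefficients, then use the ultrametric inequality to let the leading term strictly dominate, giving $|F(x)|=|c_sx^s|=|x|^s$. Your observation that the hypothesis must be read as $c_s\in\mathbb{Z}_p^{\times}$ (so that $|c_s|=1$) is also the reading the paper's proof silently relies on when it writes $|x|^s=|c_sx^s|$.
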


\begin{proof}
Pick $N\in \mathbb{Z}_{>0}$ such that $p^N>\max_{0\leq i\leq s-1}|c_i|$. Then $|c_{s-1}x^{s-1}+\cdots+c_0|\leq \max_{0\leq i\leq s-1}|c_ix^i|<|x|^s=|c_sx^s|$ when $|x|\leq p^N$. Hence $|F(x)|=|c_sx^s|=|x|^s$ whenever $|x|\geq p^N$.
\end{proof}

\begin{lem}\label{l}
If $F(x)\in \mathbb{Q}_p[x]$ and $F(x)\neq 0$ for all $x\in \mathbb{Q}_p$, then there exists a positive integer $l_0$ such that $|F(x)|\geq p^{l_0}$ for all $x\in \mathbb{Q}_p$.
\end{lem}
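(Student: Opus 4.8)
The plan is to reduce the assertion to a compactness argument by first controlling the behavior of $F$ near infinity, where the non-compactness of $\mathbb{Q}_p$ lives. Since $\mathbb{Q}_p$ is not compact, $|F|$ could a priori approach $0$ only ``at infinity,'' so the first task is to rule this out using the growth estimate of Lemma~\ref{N}.

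First I would dispose of the trivial case: if $F$ is a nonzero constant $c_0$, then $|F(x)|=|c_0|$ is a fixed positive power of $p$ for every $x$, and there is nothing to prove. So assume $s=\deg F\geq 1$ with leading coefficient $c_s\neq 0$. Normalizing by the leading coefficient (that is, applying Lemma~\ref{N} to the monic polynomial $c_s^{-1}F$, whose leading coefficient is $1\in\mathbb{Z}_p^{\times}$), I obtain a positive integer $N$ with $|F(x)|=|c_s|\cdot|x|^s$ whenever $|x|\geq p^N$. Since $s\geq 1$, this yields $|F(x)|\geq |c_s|\,p^{Ns}$ on the exterior region $\{|x|\geq p^N\}$, a fixed positive lower bound there.

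Next I would treat the complementary region, the closed ball $B_{p^N}(0)=\{x\in\mathbb{Q}_p:|x|\leq p^N\}$, which is compact. The map $x\mapsto|F(x)|$ is continuous on $B_{p^N}(0)$ and strictly positive, because $F$ has no root in $\mathbb{Q}_p$ by hypothesis. Hence its image is a compact subset of $(0,\infty)$; being contained in the discrete value group $\{p^k:k\in\mathbb{Z}\}$, this image is in fact finite, so $|F|$ attains a positive minimum $\delta>0$ on $B_{p^N}(0)$. Combining the two regions gives, for all $x\in\mathbb{Q}_p$,
\[
|F(x)|\geq \min\{\,|c_s|\,p^{Ns},\ \delta\,\}>0 .
\]

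This lower bound is itself a power of $p$, say $p^{k_0}$ with $k_0\in\mathbb{Z}$; taking $l_0=\max\{1,-k_0\}$ produces a positive integer with $|F(x)|\geq p^{-l_0}$ for all $x$, which is the asserted bound. I do not expect a serious obstacle here: the only points requiring care are the reduction to the monic case so that Lemma~\ref{N} applies verbatim, and the observation that compactness of the closed ball $B_{p^N}(0)$ upgrades ``nonvanishing'' to ``bounded away from zero.'' The conceptual heart is simply that all of the non-compactness of $\mathbb{Q}_p$ is confined to the region near infinity, where Lemma~\ref{N} already forces $|F|$ to be large.
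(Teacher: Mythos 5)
Your proof is correct, but it takes a genuinely different route from the paper's. The paper reduces (WLOG) to $F\in\mathbb{Z}_p[x]$ and argues arithmetically on $\mathbb{Z}_p$ viewed as $\varprojlim \mathbb{Z}/p^l\mathbb{Z}$: if $|F|$ had no positive lower bound, the sets $H_l$ of roots of $F$ modulo $p^l$ would all be nonempty finite sets forming an inverse system, so $\varprojlim H_l\neq\emptyset$, producing an actual root of $F$ in $\mathbb{Z}_p$ --- a contradiction. You instead split $\mathbb{Q}_p$ into the exterior region $|x|\geq p^N$, where Lemma~\ref{N} (applied to the normalized polynomial $c_s^{-1}F$) forces $|F(x)|=|c_s|\,|x|^s$ to be large, and the compact ball $B_{p^N}(0)$, where continuity, nonvanishing, and compactness give a positive minimum. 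Each approach has its merits: yours genuinely covers all of $\mathbb{Q}_p$, whereas the paper's inverse-limit argument as written only treats $x\in\mathbb{Z}_p$ and leaves the unbounded region to the reader; on the other hand, the paper's residue-by-residue formulation is effectively computable and is exactly what underlies the algorithm for the bound $b(F)$ in the appendix, while your appeal to ``a continuous function on a compact set attains its minimum'' is nonconstructive as stated (though easily made effective, since the image lies in the discrete set $\{p^k\}$, as you note). One small point: you prove $|F(x)|\geq p^{-l_0}$ rather than the literal $|F(x)|\geq p^{l_0}$ of the statement; this is the right call, since the statement as printed is a sign typo (e.g.\ $F(x)=x^2+1$ over $\mathbb{Q}_7$ takes the value $1$), and $p^{-l_0}$ is the form actually used later in the paper, but you should say explicitly that you are correcting the statement rather than calling $p^{-l_0}$ ``the asserted bound.''
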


\begin{proof}
Without loss of generality, we can assume $F(x)\in \mathbb{Z}_p[x]$. Note that $\mathbb{Z}_p$ is the inverse limit of $\mathbb{Z}/p^l\mathbb{Z}$ via the natural projection maps $\tau: \mathbb{Z}/p^l\mathbb{Z}\rightarrow \mathbb{Z}/p^{l-1}\mathbb{Z}$. For any $l\in \mathbb{Z}_{>0}$, we can view $F(x)$ as a polynomial over $\mathbb{Z}/p^l\mathbb{Z}$. Define $H_l=\{x\in \mathbb{Z}/p^l\mathbb{Z}\,|\, F(x)=0\}$. Clearly, $\tau(H_l)\subset H_{l-1}$. Suppose for every $l>0$, there exists $x\in \mathbb{Z}_p$ such that $|F(x)|\geq p^{-l}$. Then $H_l\neq 0$ for all $l>0$. So the inverse limit of $H_l$'s is nonzero. But every element in $\underleftarrow{H_l}$ is a zero of $F(x)$, a contradiction.
\end{proof}

\begin{proof}[Proof of Proposition~\ref{inv-iso-qp}]
Since $f$ is locally isometric, we must have $|f'(x)|=1$ for all $x\in \mathbb{Q}_p$. 
By Lemma ~\ref{N}, there exists $N_0\in \mathbb{Z}_{>0}$ such that $|P_2(x)|=|x|^{m'}$ and $|Q_2(x)|=|x|^{n'}$ whenever $|x|\geq p^{N_0}$. Hence, if $|x|\geq p^{N_0}$,
\[|f'(x)|=|p^{\alpha'}|\frac{|x|^{m'}}{|x|^{n'}}=p^{-\alpha'}|x|^{m'-n'}.\] So we must have $\alpha'=0$ and $m'=n'$.\\
Note that \[f'(x)=p^{\alpha}\frac{P_1'(x)Q_1(x)-P_1(x)Q'_1(x)}{Q_1^2(x)}.\] If $m\leq n$, then
\[\deg \parb{P_1'(x)Q_1(x)-P_1(x)Q'_1(x)}\leq m+n-1<2n=\deg Q_1^2(x).\] If $m\geq n+2$, then the highest term in $P'_1(x)Q_1(x)-P_1(x)Q'_1(x)$ is $(m-n)a_mb_nx^{m+n-1}$. So \[\deg \parb{P_1'(x)Q_1(x)-P_1(x)Q'_1(x)}= m+n-1>2n=\deg Q_1^2(x).\] So the only possible case is $m=n+1$. In this case, the highest term in the nominator is $a_mb_nx^{2n}$. So we must have $\alpha=0$ because $\alpha'=0$.
\end{proof}

\begin{proof}[Proof of Proposition ~\ref{m-p-qp}]
By Lemma ~\ref{l}, there exists $l_0\in \mathbb{Z}_{>0}$ such that $|Q_1(x)|\geq p^{l_0}$ for all $x\in \mathbb{Q}_p$. By Lemma ~\ref{N}, there exists $N_0\in \mathbb{Z}_{>0}$ such that $|P_1(x)|=|x|^m$ and $|Q_1(x)|=|x|^n$ whenever $|x|\geq p^{N_0}$. If $m\neq n$ or $\alpha\neq 0$, then it must belong to one of the following cases.

\emph{Case 1}: $\{m\leq n\}$ or $\{m=n+1$ and $\alpha>0\}$ \\
Let
\[N=
\left\{ \begin{array}{ll}N_0 & \textrm{if } m=n+1 \textrm{ and } \alpha>0\\
\max\{N_0, \frac{-\alpha-1}{n+1-m}\}& \textrm{if }m\leq n. \end{array} \right.\]
For any $x$ with absolute value $\geq p^N$, we have
\[|f(x)|=|p^\alpha|\frac{|x|^m}{|x|^n}=p^{-\alpha}|x|^{m-n}< |x|.\]
For any $x$ with absolute value $< p^N$, we have
\[|P_1(x)|\leq \max_{0\leq i\leq m}|a_i||x|^i\leq \max_{0\leq i\leq m}|a_i|p^{N_i}.\]
So \[|f(x)|\leq |p^{\alpha}|\frac{\max_{0\leq i\leq m}|a_i|p^{Ni}}{p^{l_0}}=p^{-\alpha+l_0}\max_{0\leq i\leq m}|a_i|p^{Ni}=: p^{l_1} .\]
Let $N_1=\max\{N, l_1\}$. Then $B_{p^{N_1+1}}(0)\subset f^{-1}(B_{p^{N_1}}(0))$. But $\mu(B_{p^{N_1+1}}(0))=p^{N_1+1}>p^{N_1}=\mu(B(p^{N_1})(0))$. So $f$ is not measure-preserving, a contradiction.

\emph{Case 2}: $\{m-n\geq 2$ or $m=n+1, \alpha<0\}$\\
Let
\[N=
\left\{ \begin{array}{ll}N_0 & \textrm{if } m=n+1 \textrm{ and } \alpha<0\\
\max\{N_0, \frac{-\alpha+1}{n+1-m}\}& \textrm{if }m-n\geq 2. \end{array} \right.\]
Then, by a similar argument as in Case 1, we have $|f(x)|=p^{-\alpha}|x|^{m-n}>|x|$ when $|x|\geq p^N$, and
\[
|f(x)|\leq p^{-\alpha+l_0}\max_{0\leq i\leq m}|a_i|p^{Ni}=: p^{l_1}\] when $|x|<p^N$. Let $N_1=\max\{N, l_1\}$. Then $f^{-1}(B_{p^{N_1+1}}(0))\subset B_{p^{N_1}}(0)$. But $\mu(B_{p_{N_1}}(0))=p^{N_1}<p^{N_1+1}=\mu(B_{p^{N_1+1}}(0))$. So $f$ is not measure-preserving, a contradiction.

Therefore, we must have $m=n+1$ and $\alpha =0$.

\end{proof}

From now on we assume $\alpha=0$ and $m=n+1$. By lemma ~\ref{N}, there exists $N\in \mathbb{Z}_{>0}$ such that $p^N>\max_{0\leq i\leq n}|a_n|$, $p^N>\max_{0\leq j\leq n-1}|b_j|$, and $|P_2(x)|=|Q_2(x)|=|x|^{n'}$ whenever $|x|\geq p^{N}$.

\begin{prop}\label{inv-iso-qp2}
 Define $N$ as above. Then $f$ is an invertible local isometry on $\mathbb{Q}_p$ if and only if $f(B_{p^{N-1}}(0))\subset B_{p^{N-1}}(0)$ and $f|_{B_{p^{N-1}}(0)}$ is invertible and locally isometric.
\end{prop}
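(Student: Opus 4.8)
The plan is to prove both implications, treating the reverse direction as the substantial one. Throughout I write $B = B_{p^{N-1}}(0)$ and use the partition $\mathbb{Q}_p = B \sqcup \bigsqcup_{j \geq N} S_{p^j}(0)$, which is legitimate because no element of $\mathbb{Q}_p$ has absolute value strictly between $p^{N-1}$ and $p^N$. The preliminary observation driving everything is that the choice of $N$ together with the normalization $\alpha = 0$, $m = n+1$ forces the leading terms of $P_1$ and $Q_1$ to dominate on $\{|x|\geq p^N\}$: there one has $|f(x)| = |P_1(x)|/|Q_1(x)| = |x|^m/|x|^n = |x|$, and, by the analogous domination applied to the numerator $P_1'Q_1 - P_1Q_1'$ (degree $2n$, unit leading coefficient $a_mb_n$) and to $Q_1^2$ (degree $2n$, unit leading coefficient $b_n^2$), also $|f'(x)| = 1$. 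In particular $f$ maps each shell $S_{p^j}(0)$ into itself for $j \geq N$ and is locally isometric off $B$.

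For the reverse direction, assume $f(B)\subseteq B$ with $f|_B$ an invertible local isometry. First I would upgrade local isometry to all of $\mathbb{Q}_p$: by hypothesis $|f'|=1$ on $B$, by the preliminary $|f'|=1$ on $\{|x|\geq p^N\}$, and since these sets cover $\mathbb{Q}_p$, Proposition~\ref{criteria} (applied on compact pieces) shows $f$ is a local isometry everywhere. The crux is then global invertibility on the noncompact region. I would show $f$ maps each shell $S_{p^j}(0)$ ($j\geq N$) bijectively onto itself by rescaling: put $w = p^j x$, so $S_{p^j}(0)$ corresponds to $\mathbb{Z}_p^\times$, and set $h_j(w) = p^j f(p^{-j}w)$. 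A direct computation gives $h_j = \tilde P_j/\tilde Q_j$ with $\tilde P_j,\tilde Q_j\in\mathbb{Z}_p[w]$, and because $j\geq N\geq 1$ the reduction mod $p$ collapses all lower terms, leaving $h_j(w)\equiv (a_m/b_n)\,w \pmod p$, a bijection of $\mathbb{F}_p^\times$ since $\overline{a_m/b_n}\neq 0$. Applying Hensel's lemma (Lemma~\ref{hensel}) to $\tilde P_j(w)-\eta\tilde Q_j(w)$, whose reduction $w^n(a_m w - \eta b_n)$ has the relevant root simple, produces a unique preimage in $\mathbb{Z}_p^\times$ of each $\eta\in\mathbb{Z}_p^\times$; the existence gives surjectivity and the uniqueness clause gives injectivity of $h_j$, hence of $f$ on $S_{p^j}(0)$. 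Assembling $f|_B$ with these shell bijections over the above partition yields that $f$ is a bijection of $\mathbb{Q}_p$, completing this direction.

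For the forward direction, assume $f$ is an invertible local isometry on $\mathbb{Q}_p$. The preliminary gives $|f(x)|=|x|$ for $|x|\geq p^N$, so $f$ maps $U := \mathbb{Q}_p\setminus B$ into $U$. Restricting the global injection to the compact ball $B$, the map $f|_B$ is an injective, uniformly locally isometric self-map, whence $\mu(f(B))=\mu(B)=p^{N-1}$. Since $f$ is a bijection of $\mathbb{Q}_p$, we have $\mathbb{Q}_p = f(B)\sqcup f(U)$ with $f(U)\subseteq U$, which forces $B\subseteq f(B)$; as $f(B)$ is open and carries the same finite measure as $B$, this forces $f(B)=B$. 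Thus $f(B)\subseteq B$ and $f|_B$ is a bijective local isometry, as required.

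The main obstacle is the reverse direction's global invertibility, because local isometry alone does not entail bijectivity (a locally isometric self-map of a compact open set can have a vertex of in-degree $>1$ in its digraph). The rescaling $h_j(w)=p^jf(p^{-j}w)$ is the device that overcomes this: it transports every shell uniformly onto $\mathbb{Z}_p^\times$, where the reduction mod $p$ is the linear bijection $w\mapsto (a_m/b_n)w$, so Hensel's lemma applies with a simple root and delivers bijectivity on all shells at once. The subtleties I would check carefully are that the relevant factor of the reduced polynomial is genuinely a simple root, so Hensel gives both existence and the uniqueness needed for injectivity, and that $\tilde Q_j$ does not vanish at the lifted point so that $h_j$ is defined there.
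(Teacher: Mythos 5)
Your proof is correct and follows essentially the same route as the paper's: both decompose $\mathbb{Q}_p$ into $B_{p^{N-1}}(0)$ and the spheres $S_{p^j}(0)$, $j\geq N$, note that the choice of $N$ forces $|f(x)|=|x|$ and $|f'(x)|=1$ on the spheres, and establish bijectivity on each sphere by rescaling to $\mathbb{Z}_p^\times$ and applying Hensel's lemma at the approximate root $\frac{b_n}{a_{n+1}}\eta$ (your $h_j$ is precisely the paper's substitution). The only divergence is minor: in the forward direction the paper gets $f(B_{p^{N-1}}(0))\cap S_{p^j}(0)=\emptyset$ directly from surjectivity of $f$ on each sphere together with global injectivity, while you use a Haar-measure count; both rest on the same underlying facts.
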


\begin{proof}
If $|x|\geq p^N$, then $|f(x)|=\frac{|x|^{n+1}}{|x|^n}=|x|$. So $f\parb{S_{p^{N_1}}(0)}\subset S_{p^{N_1}}(0)$ whenever $N_1\geq N$. Now we show that $f|_{S_{p^{N_1}}(0)}$ is always an invertible local isometry. First, by the discussion in the previous proof, we have $|f'(x)|=1$ for all $x\in S_{p^{N_1}}(0)$. So $f|_{S_{p^{N_1}}(0)}$ is a local isometry.\\
To verify the invertibility, we show the following: for any $a\in \mathbb{Z}_p^{\times}$, there exists $x_0\in p\mathbb{Z}_p^{\times}$ such that $f(p^{-N_1}(\frac{b_n}{a_{n+1}}a+x_0))=p^{-N_1}a$. Let $F(x)=p^{(n+1)N}P_1(\frac{b_n}{a_{n+1}}a+x)-ap^{nN}Q_1(\frac{b_n}{a_{n+1}}a+x)\in \mathbb{Z}_p[x]$. We have
\[
|F(0)|= p^{-N}|\sum_{i=0}^{n}p^ia_{n-i}(\frac{b_n}{a_{n+1}}a)^{n-i}-ap^N\sum_{i=0}^{n-1}p^ib_{n-i-1}(\frac{b_n}{a_{n+1}}a)^{n-i-1}|\\
\leq p^{-1},
\]
and
\begin{align*}
|F'(0)|=& |\frac{b_n^n}{a_{n+1}^{n-1}}a^n+p^N\sum_{i=0}^{n-1}(n-i)p^ia_{n-i}(\frac{b_n}{a_{n+1}}a)^{n-i-1}\\
& +ap^N\sum_{i=0}^{n-2}(n-i-1)p^ib_{n-i-1}(\frac{b_n}{a_{n+1}}a)^{n-i-2}|\\
=& 1.
\end{align*}
So, by Lemma ~\ref{hensel}, $F(x)$ has a root $x=x_0$ in $p\mathbb{Z}_p$. Then \[f(p^{-N_1}(\frac{b_n}{a_{n+1}}a+x_0))=p^{-(n+1)N_1}Q(\frac{b_n}{a_{n+1}}a+x_0)F(x_0)+p^{-N_1}a=p^{-N_1}a,\]
as desired. Hence $f|_{S_{p^{N_1}}(0)}$ is always an invertible local isometry.\\
If $f$ is invertible, then $f(B_{p^{N-1}}(0))\cap S_{p^{N_1}}(0)=\emptyset$ for all $N_1\geq N$. So $f(B_{p^{N-1}}(0))\subset B_{p^{N-1}}(0)$. Clearly, $f|_{B_{p^{N-1}}(0)}$ must be invertible and locally isometric.\\
The other direction follows immediately from \[\mathbb{Q}_p= B_{p^{N-1}}(0)\cup \parb{\bigcup_{N_1=N}^{\infty}S_{p^{N_1}}(0)}.\]
\end{proof}

\begin{prop}\label{m-p-qp2}
Define $N$ as above. Then $f$ is measure-preserving on $\mathbb{Q}_p$ if and only if $f(B_{p^{N-1}}(0))\subset B_{p^{N-1}}(0)$ and $f|_{B_{p^{N-1}}(0)}$ is measure-preserving.
\end{prop}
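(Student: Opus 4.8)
The plan is to mirror the proof of Proposition~\ref{inv-iso-qp2}, exploiting the same decomposition
\[
\mathbb{Q}_p = B_{p^{N-1}}(0)\cup\left(\bigcup_{N_1=N}^{\infty}S_{p^{N_1}}(0)\right)
\]
into a compact ball together with countably many spheres, and the fact (already established inside the proof of Proposition~\ref{inv-iso-qp2}) that for every $N_1\geq N$ the map $f$ restricts to an invertible local isometry $S_{p^{N_1}}(0)\to S_{p^{N_1}}(0)$, with $f(S_{p^{N_1}}(0))=S_{p^{N_1}}(0)$. First I would record the measure-theoretic consequence: each $S_{p^{N_1}}(0)$ is a compact open subset of $\mathbb{Q}_p$, and $f$ restricted to it is an invertible local isometry, so Theorem~\ref{m-p} gives that $f|_{S_{p^{N_1}}(0)}$ preserves $\mu$. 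In particular the open set $U:=\bigcup_{N_1\geq N}S_{p^{N_1}}(0)=\mathbb{Q}_p\setminus B_{p^{N-1}}(0)$ is forward invariant, $f(U)\subseteq U$.

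For the direction ($\Leftarrow$), assume $f(B_{p^{N-1}}(0))\subseteq B_{p^{N-1}}(0)$ and that $f|_{B_{p^{N-1}}(0)}$ is measure-preserving. Then every piece of the decomposition is carried into itself, and $f$ preserves $\mu$ on each piece. For measurable $A$, a preimage point lands in the same piece as its image, so $f^{-1}(A)\cap B_{p^{N-1}}(0)=(f|_{B_{p^{N-1}}(0)})^{-1}(A\cap B_{p^{N-1}}(0))$ and $f^{-1}(A)\cap S_{p^{N_1}}(0)=(f|_{S_{p^{N_1}}(0)})^{-1}(A\cap S_{p^{N_1}}(0))$. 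Summing the measures gives $\mu(f^{-1}(A))=\mu(A\cap B_{p^{N-1}}(0))+\sum_{N_1\geq N}\mu(A\cap S_{p^{N_1}}(0))=\mu(A)$. This part is routine bookkeeping.

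The content is in the direction ($\Rightarrow$). Assume $f$ is measure-preserving on $\mathbb{Q}_p$. The only nontrivial point is the set-theoretic containment $f(B_{p^{N-1}}(0))\subseteq B_{p^{N-1}}(0)$; once it holds, every preimage of a set $A\subseteq B_{p^{N-1}}(0)$ lies in $B_{p^{N-1}}(0)$ (since each sphere is preserved), so $(f|_{B_{p^{N-1}}(0)})^{-1}(A)=f^{-1}(A)$ and measure-preservation of $f|_{B_{p^{N-1}}(0)}$ follows from that of $f$. To prove the containment I would argue by a measure/topology upgrade. Fix $N_1\geq N$. Since $f(S_{p^{N_1}}(0))=S_{p^{N_1}}(0)$, we have $S_{p^{N_1}}(0)\subseteq f^{-1}(S_{p^{N_1}}(0))$, while measure-preservation forces $\mu(f^{-1}(S_{p^{N_1}}(0)))=\mu(S_{p^{N_1}}(0))<\infty$; hence the difference $f^{-1}(S_{p^{N_1}}(0))\setminus S_{p^{N_1}}(0)$ is $\mu$-null. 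Because every other sphere is forward invariant into itself, this difference meets no $S_{p^{N_2}}(0)$ and so lies in $B_{p^{N-1}}(0)$. But it is also open, being $f^{-1}(S_{p^{N_1}}(0))\cap B_{p^{N-1}}(0)$ with $f$ continuous and $S_{p^{N_1}}(0)$, $B_{p^{N-1}}(0)$ open; a nonempty open subset of $\mathbb{Q}_p$ has positive measure, so the difference is empty. Letting $N_1$ range over all integers $\geq N$ yields $f(B_{p^{N-1}}(0))\cap U=\emptyset$, i.e. $f(B_{p^{N-1}}(0))\subseteq B_{p^{N-1}}(0)$.

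I expect the main obstacle to be precisely this last upgrade from an almost-everywhere statement to a pointwise containment: measure-preservation by itself only makes $B_{p^{N-1}}(0)$ invariant modulo null sets, and the genuinely set-theoretic conclusion requires combining the sphere-by-sphere bijectivity inherited from Proposition~\ref{inv-iso-qp2} with the fact that in $\mathbb{Q}_p$ continuity renders the exceptional set open, hence either empty or of positive measure.
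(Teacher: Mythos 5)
Your proof is correct and follows essentially the same route as the paper: both decompose $\mathbb{Q}_p$ into $B_{p^{N-1}}(0)$ and the spheres $S_{p^{N_1}}(0)$, invoke the sphere-by-sphere invertible local isometry established in Proposition~\ref{inv-iso-qp2} together with Theorem~\ref{m-p}, and rule out $f(B_{p^{N-1}}(0))$ meeting a sphere by producing excess preimage mass contradicting measure preservation. Your ``the exceptional set is null and open, hence empty'' step is just a repackaging of the paper's explicit extra ball $B_{r_1}(a)$ inside the preimage of the sphere.
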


\begin{proof}
In the proof of Proposition ~\ref{inv-iso-qp2}, we have seen that $f(S_{p^{N_1}}(0))\subset S_{p^{N_1}}(0)$ and $f|_{S_{p^{N_1}}(0)}$ is an invertible local isometry whenever $N_1\geq N$. By Theorem ~\ref{m-p}, $f|_{S_{p^{N_1}}(0)}$ is measure-preserving. \\
Suppose $f$ is invertible. If $a\in f(B_{p^{N-1}}(0))\cap S_{p^{N_1}}(0)$ for some $N_1\geq N$, then $f^{-1}(S_{p^{N_1}}(0))\subset S_{p^{N_1}}(0)\cup B_{r_1}(a)$ for some $r_1>0$. Then \[p^{N_1}=\mu(f^{-1}(S_{p^{N_1}}(0)))\geq \mu(S_{p^{N_1}}(0))+\mu(B_{r_1}(a))=p^{N_1}+r_1>p^{N_1},\] a contradiction. Hence, $f(B_{p^{N-1}}(0))\cap S_{p^{N_1}}(0)=\emptyset$ for all $N_1\geq N$. So $f(B_{p^{N-1}}(0))\subseteq B_{p^{N-1}}(0)$. Clearly, $f|_{B_{p^{N-1}}(0)}$ must be measure-preserving.\\
The other direction follows immediately from \[\mathbb{Q}_p= B_{p^{N-1}}(0)\cup \parb{\bigcup_{N_1=N}^{\infty}S_{p^{N_1}}(0)}.\]
\end{proof}

Once $f$ is restricted on a compact open subset of $\mathbb{Q}_p$, we can apply Theorem ~\ref{m-p}. The following corollary shows that measure-preserving and invertible locally isometric are the same thing over $\mathbb{Q}_p$.

\begin{cor}\label{m-p-inv-iso}
Let $f$ be a locally 1-Lipschitz rational function on $\mathbb{Q}_p$. Then $f$ is measure-preserving if and only if it is invertible and locally isometric.
\end{cor}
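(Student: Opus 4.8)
The plan is to reduce both properties to a single compact open ball and then invoke Theorem~\ref{m-p}, where the equivalence of measure-preserving, invertible, and invertible-and-locally-isometric has already been established on compact open sets. The bridge between the global statement over $\mathbb{Q}_p$ and the compact setting is provided by Propositions~\ref{inv-iso-qp2} and~\ref{m-p-qp2}, which localize the respective property to the ball $B_{p^{N-1}}(0)$ for the same integer $N$ fixed in the paragraph preceding Proposition~\ref{inv-iso-qp2}. The whole argument is then a matter of chaining these results together in the correct order.

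For the forward direction, I would suppose $f$ is measure-preserving. By Proposition~\ref{m-p-qp} we have $\alpha=0$ and $m=n+1$, so $N$ is defined and Proposition~\ref{m-p-qp2} applies: we get $f(B_{p^{N-1}}(0))\subset B_{p^{N-1}}(0)$ and $f|_{B_{p^{N-1}}(0)}$ measure-preserving. Since $B_{p^{N-1}}(0)$ is compact open and $f$ is locally $1$-Lipschitz, Theorem~\ref{m-p} yields that $f|_{B_{p^{N-1}}(0)}$ is invertible and locally isometric. Feeding this back into Proposition~\ref{inv-iso-qp2} then shows that $f$ is an invertible local isometry on all of $\mathbb{Q}_p$.

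For the reverse direction, I would suppose $f$ is invertible and locally isometric. By Proposition~\ref{inv-iso-qp} we again have $\alpha=0$ and $m=n+1$, so the same $N$ is available and Proposition~\ref{inv-iso-qp2} gives $f(B_{p^{N-1}}(0))\subset B_{p^{N-1}}(0)$ with $f|_{B_{p^{N-1}}(0)}$ invertible and locally isometric. Theorem~\ref{m-p} then shows $f|_{B_{p^{N-1}}(0)}$ is measure-preserving, and Proposition~\ref{m-p-qp2} lifts this to measure-preservation of $f$ on $\mathbb{Q}_p$.

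I expect essentially no serious obstacle here: all the analytic content has been carried out in the preceding propositions, and the corollary is a bookkeeping argument assembling them. The one point requiring care is to confirm that the integer $N$ appearing in Propositions~\ref{inv-iso-qp2} and~\ref{m-p-qp2} is literally the same constant (it is, namely the $N$ fixed in the paragraph just before Proposition~\ref{inv-iso-qp2}), so that the localized domain $B_{p^{N-1}}(0)$ is identical in both localization steps and the two propositions compose without any adjustment of the radius.
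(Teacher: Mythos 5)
Your proposal is correct and follows exactly the paper's route: localize both properties to $B_{p^{N-1}}(0)$ via Propositions~\ref{inv-iso-qp2} and~\ref{m-p-qp2} and then invoke Theorem~\ref{m-p} on that compact open ball. You are in fact slightly more careful than the paper in noting that Propositions~\ref{m-p-qp} and~\ref{inv-iso-qp} are what justify $\alpha=0$, $m=n+1$ (and hence the existence of the common $N$) in each direction.
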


\begin{proof}
By Proposition ~\ref{inv-iso-qp2} and \ref{m-p-qp2}, we only need to show that $f|_{B_{p^{N-1}}(0)}$ is measure-preserving if and only if it is invertible and locally isometric. This follows immediately from Theorem ~\ref{m-p}.
\end{proof}

Now consider the special case when $P_1(x), Q_1(x)\in \mathbb{Z}_p[x]$. In this case, we can always take $N=1$. Then we obtain the following corollary.

\begin{cor}\label{cor-inv-iso-qp}
Suppose that  $f(x)=\frac{P_1(x)}{Q_1(x)}$ where 
\begin{align*}P_1(x)&=a_{n+1}x^{n+1}+a_nx^n+\cdots+a_0\in \mathbb{Z}_p[x],\\
 Q_1(x)&=b_nx^n+\cdots+b_0\in \mathbb{Z}_p[x]\end{align*} and $a_{n+1}, b_n\in \mathbb{Z}_p^{\times}$. Assume $Q_1(x)$ has no root on $\mathbb{Q}_p$. Then $f$ is an invertible local isometry on $\mathbb{Q}_p$ if and only if  $f(\mathbb{Z}_p)\subset \mathbb{Z}_p$ and $f|_{\mathbb{Z}_p}$ is invertible and locally isometric.
\end{cor}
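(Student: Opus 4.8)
The plan is to deduce the corollary directly from Proposition~\ref{inv-iso-qp2} by checking that, in the present situation, the constant $N$ defined just before that proposition may be taken equal to $1$. Once this is established, we have $B_{p^{N-1}}(0)=B_{p^{0}}(0)=B_1(0)=\{x:|x|\leq 1\}=\mathbb{Z}_p$, and the statement of Proposition~\ref{inv-iso-qp2} becomes verbatim the assertion of the corollary.

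First I would observe that the hypotheses already place us in the regime $\alpha=0$, $m=n+1$ under which $N$ and Proposition~\ref{inv-iso-qp2} were formulated: since $f=P_1/Q_1$ is written with no power of $p$ factored out and $a_{n+1},b_n\in\mathbb{Z}_p^{\times}$, we get $\alpha=0$, $m=\deg P_1=n+1$, and $\deg Q_1=n$; the assumption that $Q_1$ has no root on $\mathbb{Q}_p$ guarantees $f$ is everywhere defined. Next I would verify the three defining requirements of $N$ with the choice $N=1$. Because $P_1,Q_1\in\mathbb{Z}_p[x]$, every coefficient satisfies $|a_i|\leq 1<p$ and $|b_j|\leq 1<p$, so $p^{1}>\max_{0\leq i\leq n}|a_i|$ and $p^{1}>\max_{0\leq j\leq n-1}|b_j|$, which gives the first two conditions immediately.

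For the third condition I would use $f'(x)=\dfrac{P_1'(x)Q_1(x)-P_1(x)Q_1'(x)}{Q_1(x)^2}$ and compute that the leading term of the numerator is $a_{n+1}b_n x^{2n}$ while that of the denominator is $b_n^2 x^{2n}$, both having unit leading coefficient. Hence, after cancelling any common factor (which by Gauss's lemma may be chosen in $\mathbb{Z}_p[x]$ with unit leading coefficient), the normalized polynomials $P_2,Q_2$ lie in $\mathbb{Z}_p[x]$ with unit leading coefficients and $\alpha'=0$, $m'=n'$ as in the proof of Proposition~\ref{inv-iso-qp}. Applying Lemma~\ref{N} to $P_2$ and to $Q_2$, again using that all their coefficients have absolute value at most $1<p$, yields $|P_2(x)|=|Q_2(x)|=|x|^{n'}$ whenever $|x|\geq p$, which is precisely the third requirement with $N=1$.

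With $N=1$ admissible, I would conclude by invoking Proposition~\ref{inv-iso-qp2}: $f$ is an invertible local isometry on $\mathbb{Q}_p$ if and only if $f(B_1(0))\subset B_1(0)$ and $f|_{B_1(0)}$ is invertible and locally isometric, that is, if and only if $f(\mathbb{Z}_p)\subset\mathbb{Z}_p$ and $f|_{\mathbb{Z}_p}$ is invertible and locally isometric. The only genuinely delicate point is the third condition on $N$: one must ensure that passing to the reduced form of $f'$ neither destroys integrality nor inflates the coefficients, and this is exactly where the unit leading coefficients together with Gauss's lemma are used. The remainder of the argument is a direct specialization of Proposition~\ref{inv-iso-qp2}.
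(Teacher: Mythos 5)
Your proposal is correct and follows exactly the paper's route: the paper's entire justification is the remark that when $P_1,Q_1\in\mathbb{Z}_p[x]$ one may take $N=1$, so that $B_{p^{N-1}}(0)=\mathbb{Z}_p$ and Proposition~\ref{inv-iso-qp2} specializes to the corollary. You simply supply the details the paper leaves implicit (in particular the Gauss's-lemma check that the reduced numerator and denominator of $f'$ stay in $\mathbb{Z}_p[x]$ with unit leading coefficients, so Lemma~\ref{N} applies with $N=1$), which is a welcome but not essentially different elaboration.
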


\begin{exmp}\label{exmp-inv-qp}
Consider the rational function $f(x)=\frac{x^4+x^3+2x^2+1}{x^3-x+1}$ over $\mathbb{Q}_3$. By Corollary ~\ref{cor-inv-iso-qp}, to show that $f$ is an invertible local isometry, we only need to work on $\mathbb{Z}_3$. Clearly, $f(\mathbb{Z}_3)\subseteq \mathbb{Z}_3$. $f'(x)=\frac{x^6-5x^4+2x^3-2x^2+4x+1}{(x^3-x+1)^2}$. It is not hard to see that $|f'(x)|=1$ for all $x\in \mathbb{Z}_3$. So $f|_{\mathbb{Z}_3}$ is a local isometry on $\mathbb{Z}_3$. We can take $r=3^{-1}$ and $l=-1$. Then both $G(f|_{\mathbb{Z}_3}, 3^{-1})$ and $G^{\ast}(f|_{\mathbb{Z}_3}, 3^{-1})$ consist of a single cycle of length 3. So $t_0=-1$. By Proposition ~\ref{m-p-comp1},  $f|_{\mathbb{Z}_3}$ is an invertible local isometry. Hence, $f$ is an invertible local isometry on $\mathbb{Q}_3$. By Corollary ~\ref{m-p-inv-iso}, we know that $f$ is also measure-preserving.
\end{exmp}

\appendix
\section{ }

\subsection{Proofs of Proposition 2.1 and 2.2}

Write $f(x)=p^{\alpha}\frac{P_1(x)}{Q_1(x)}$ where $P_1(x) = a_mx^m+a_{m-1}x^{m-1}+\cdots+a_0$ and $Q_1(x)=b_nx^n+b_{n-1}x^{n-1}+\cdots+b_0$ with $a_m, b_n\in \mathbb{Z}_p^{\times}$.

\begin{proof}[Proof of Proposition~\ref{no-minimal}]

By Lemma ~\ref{l}, there exists $l_0\in \mathbb{Z}_{>0}$ such that $|Q_1(x)|\geq p^{l_0}$ for all $x\in \mathbb{Q}_p$. By Lemma ~\ref{N}, there exists $N_0\in \mathbb{Z}_{>0}$ such that $|P_1(x)|=|x|^m$ and $|Q_1(x)|=|x|^n$ whenever $|x|\geq p^{N_0}$.

\emph{Case 1}: $\{m\leq n\}$ or $\{m=n+1$ and $\alpha>0\}$ \\
Let
\[N=
\left\{ \begin{array}{ll}N_0 & \textrm{if } m=n+1 \textrm{ and } \alpha>0\\
\max\{N_0, \frac{-\alpha}{n+1-m}\}& \textrm{if }m\leq n. \end{array} \right.\]
For any $x$ with absolute value $\geq p^N$, we have
\[|f(x)|=|p^\alpha|\frac{|x|^m}{|x|^n}=p^{-\alpha}|x|^{m-n}\leq |x|.\]
For any $x$ with absolute value $< p^N$, we have
\[|P_1(x)|\leq \max_{0\leq i\leq m}|a_i||x|^i\leq \max_{0\leq i\leq m}|a_i|p^{N_i}.\]
So \[|f(x)|\leq |p^{\alpha}|\frac{\max_{0\leq i\leq m}|a_i|p^{Ni}}{p^{l_0}}=p^{-\alpha+l_0}\max_{0\leq i\leq m}|a_i|p^{Ni}=: p^{l_1} .\]
Now it is clear that $f(B_{p^{l_1}}(0))\subseteq B_{p^{l_1}}(0)$. So $f$ is not minimal.\\

\emph{Case 2}: $\{m>n$ and $\alpha\leq 0\}$\\
If $|x|\geq p^{N_0}$, then \[ |f(x)|=|p^{\alpha}|\frac{|x|^m}{|x|^n}=p^{-\alpha}|x|^{m-n}\geq p^{N_0}. \]
So $\{f^{(n)}(x)\,|\, n\in \mathbb{Z}_{>0}\}\cap B_{p^{N-1}}(0)=\emptyset$. Hence $f$ is not minimal.\\

\emph{Case 3}: $\{m-n\geq 2$ and $\alpha>0\}$\\
Let $N = \max\{N_0, \frac{\alpha}{m-n-1}\}$. If $|x|\geq p^N$, then $|f(x)|=p^{-\alpha}|x|^{m-n}\geq p^N$. So $\{f^{(n)}(x)\,|\, n\in \mathbb{Z}_{>0}\}\cap B_{p^{N-1}}(0)=\emptyset$. Therefore, $f$ is not minimal.

\end{proof}

\begin{proof}[Proof of Proposition~\ref{no-ergodic}]
By Proposition ~\ref{m-p-qp}, we must have $\alpha=0$ and $m=n+1$. Define $N$ as in Proposition ~\ref{m-p-qp2}. We already seen in the proof of Proposition ~\ref{m-p-qp2} that $S_{p^{N}}(0)$ is $f$-invariant. So $f$ is not ergodic.
\end{proof}

\subsection{An Algorithm to Determine $r$}
In this section we present an algorithm to compute the radius $r$ involved in Definition ~\ref{unif-local-scal}.\\

First, consider any polynomial $F(x)\in \mathbb{Z}_p[x]$. Suppose $F(x)\neq 0$ on a compact open subset $X\subset \mathbb{Q}_p$. Then $|F(x)|$ is bounded below on $X$. Below is an algorithm to compute a lower bound for $|F(x)|$. We denote this lower bound by $b(F)$.

\begin{enumerate}
\item Initial Data: $X=\bigsqcup_{i=1}^mD_{s,i}$ where $D_{s,i}$ are balls of radius $p^s$. We can assume that $s<0$.
\item Select representatives $S_t$ for $t\leq s$. (See Section ~\ref{S:subsidiary} for the definition of \emph{representatives}.)
\item Let $t=s$.
\item Compute $F(a_{t,i})$ for $a_{t,i}\in S_t$.
\item If $F(a_{t,i})\equiv 0 (\textrm{mod }p^{-t})$ for some $a_{t,i}\in S_t$, then replace $t$ by $t-1$ and repeat step (4). If $F(a_{t,i})\not\equiv 0 (\textrm{mod }p^{-t})$ for all $a_{t,i}\in S_t$, then set $b(F)=p^{t+1}$.
\end{enumerate}
Since $|F(x)|$ is bounded below, this process will terminate eventually.\\

Now let $f(x) = \frac{P(x)}{Q(x)}$ for $P(x), Q(x)\in \mathbb{Z}_p[x]$ and assume that $Q(x)$ has not root on $X$. We also assume that $f'(x)$ has no zero on $X$. Write \[f(x)-f(y)=\frac{x-y}{Q(x)Q(y)}T(x,y).\]
and write $T_1(x)=T(x,y)$. Note that $T_1(x)=Q(x)^2f'(x)$. So $T_1(x)$ has no root on $X$. Let $r=\min\{p^{-1}b(Q), p^{-1}b(T_1)\}$. The following lemma tells us this $r$ is the one satisfies the conditions in Definition ~\ref{unif-local-scal}.

\begin{lem}
For any $a\in X$ and $x,y\in B_r(a)$, we have \[|f(x)-f(y)|=|f'(a)|\cdot|x-y|.\]
\end{lem}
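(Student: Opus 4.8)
The plan is to prove the claimed equality by expressing $f(x)-f(y)$ in terms of the polynomial data and then showing that the three factors $Q(x)$, $Q(y)$, and $T(x,y)$ are each controlled to the right $p$-adic precision on the ball $B_r(a)$. First I would recall the factorization $f(x)-f(y)=\frac{x-y}{Q(x)Q(y)}T(x,y)$ and observe that $T(x,x)=Q(x)^2 f'(x)$, which is exactly $T_1(x)$, so the statement reduces to proving three norm-stability facts: that $|Q(x)|=|Q(a)|$, that $|Q(y)|=|Q(a)|$, and that $|T(x,y)|=|T_1(a)|=|Q(a)^2 f'(a)|$, for all $x,y\in B_r(a)$.

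The key idea is that $r$ was defined precisely so that $r\leq p^{-1}b(Q)$ and $r\leq p^{-1}b(T_1)$, where $b(\cdot)$ is a lower bound for the norm produced by the algorithm. So the second step is to show that whenever $F$ is a polynomial with $|F|\geq b(F)$ on $X$, the norm $|F|$ is \emph{locally constant} on balls of radius $r<p^{-1}b(F)$. This is an application of the ultrametric inequality together with Lemma~\ref{local-same-norm}: if $x,y\in B_r(a)$ then $|x-y|\leq r$, and by the local scaling/Lipschitz behavior of $F$ the difference $|F(x)-F(y)|$ is strictly smaller than the floor $b(F)\leq |F(a)|$, forcing $|F(x)|=|F(y)|=|F(a)|$ by the isosceles-triangle property. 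Applying this to $F=Q$ with the bound $b(Q)$ gives $|Q(x)|=|Q(y)|=|Q(a)|$, and applying it to $F=T_1$ gives control of the diagonal value of $T$.

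The main obstacle, which I would address third, is that $T(x,y)$ is a polynomial in \emph{two} variables, whereas $b(T_1)$ controls only the single-variable diagonal $T_1(x)=T(x,x)$. The plan here is to show that on the bidisk $x,y\in B_r(a)$ the value $T(x,y)$ stays norm-equal to $T_1(a)$: writing $T(x,y)-T(x,x)$ as $(y-x)$ times a polynomial with $\mathbb{Z}_p$-bounded coefficients, the ultrametric estimate gives $|T(x,y)-T_1(x)|\leq r\cdot(\text{const})<b(T_1)\leq|T_1(a)|$, and combining with $|T_1(x)|=|T_1(a)|$ yields $|T(x,y)|=|T_1(a)|$. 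Once these three norm identities are in hand, I would conclude by direct computation:
\begin{equation*}
|f(x)-f(y)|=\frac{|x-y|\cdot|T(x,y)|}{|Q(x)|\cdot|Q(y)|}=\frac{|x-y|\cdot|Q(a)|^2|f'(a)|}{|Q(a)|^2}=|f'(a)|\cdot|x-y|,
\end{equation*}
which is the desired equality. The delicate point throughout is tracking the exact power-of-$p$ thresholds so that every ``strictly less than'' inequality genuinely holds, but the structure is a routine-but-careful bookkeeping once the two-variable stability of $T$ is established.
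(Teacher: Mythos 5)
Your proposal is correct and follows essentially the same route as the paper: factor $f(x)-f(y)=\frac{x-y}{Q(x)Q(y)}T(x,y)$, use the ultrametric inequality together with the thresholds $r<b(Q)$ and $r<b(T_1)\leq|T_1(a)|$ to show $|Q(x)|=|Q(y)|=|Q(a)|$ and $|T(x,y)|=|T(a,a)|=|Q(a)|^2|f'(a)|$, then divide. The only cosmetic difference is that you pass through the diagonal in two steps ($T(x,y)\to T(x,x)=T_1(x)\to T_1(a)$) where the paper estimates $|T(x,y)-T(a,a)|$ in a single ultrametric bound; the invocation of Lemma~\ref{local-same-norm} is unnecessary since your explicit $(y-x)$-factorization with $\mathbb{Z}_p$-coefficients already gives the quantitative bound.
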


\begin{proof}

Write $T(x,y)=\sum a_{i,j}x^iy^j\in \mathbb{Z}_p[x,y]$. If $x,y\in B_r(a)$, then
\begin{align*}
|T(x,y)-T(a,a)|=& |\sum a_{i,j}(x^i-a^i)y^j+\sum a_{i,j}a^i(y^j-a^j)|\\
\leq&\max\{ |(x-a)|\cdot|\sum a_{i,j} (x^{i-1}+x^{i-2}a+\cdots+a^{i-1})y^j|,\\
& |y-a|\cdot|\sum a_{i,j}a^i(y^{j-1}+y^{j-2}a+\cdots+a^{j-1})|\}\\
\leq& r<b(T_1)\leq |T_1(a)|=|T(a,a)|
\end{align*}
Hence $|T(x,y)|=|T(a,a)|$ whenever $x,y\in B_r(a)$. Similarly, $|Q(x)|=|Q(y)|=|Q(a)|$ whenever $x,y\in B_r(a)$. Therefore, \[|f(x)-f(y)|=\frac{|x-y|}{|Q(x)|\cdot|Q(y)|}|T(x,y)|=\frac{|x-y|}{|Q(a)|^2}|T(a,a)|=|x-y|\cdot|f'(a)|,\] as desired.

\end{proof}


\bibliographystyle{amsalpha}
\bibliography{Digraph_Bib}


\end{document}